\newtheorem{lemma}{Lemma}[section]
\newtheorem{theorem}{Theorem}[section]
\newtheorem{definition}{Definition}[section]
\begin{document}
\title{Extremal spectral radius of weighted adjacency matrices of bicyclic graphs\footnote{This paper was supported by the National Natural Science Foundation of China [No. 11971406]. }}
\author{\small Jiachang Ye$^{1}$, Junli Hu$^{2}$,  Xiaodan Chen$^{2}$ \footnote{Corresponding author. E-mail: x.d.chen@live.cn}
\\  \\\small $^{1}$ School of Mathematical Sciences, Xiamen University, \\\small Xiamen, 361005, P.R. China  \\\small $^{2}$ College of Mathematics and Information Science, Guangxi University, \\\small Nanning, 530004,  P.R. China }
\date{} \maketitle

\begin{abstract}
The weighted adjacency matrix $A_{f}(G)$ of a simple graph $G=(V,E)$ is the $|V|\times|V|$ matrix
whose $ij$-entry equals $f(d_{i},d_j)$,
where $f(x,y)$ is a symmetric function such that $f(d_i,d_j)>0$ if $ij\in E$ and  $f(d_i,d_j)=0$  if $ij\notin E$ and $d_i$ is the degree of the vertex $i$.
In this paper, we determine the unique graph having the largest spectral radius of $A_{f}(G)$ among all the bicyclic graphs under the assumption that $f(x,y)$ is increasing and convex in $x$ and $f(x_1,y_1)\geq f(x_2,y_2)$ when $|x_1-y_1|>|x_2-y_2|$ and $x_1+y_1=x_2+y_2$.
Moreover, we determine the unique graph having the second largest spectral radius of $A_{f}(G)$ among all the bicyclic graphs
when $f(x,y)=x+y$, $(x+y)^2$ or $x^2+y^2$, which corresponds to the well-known first Zagreb index, first hyper-Zagreb index, and forgotten index, respectively.
In addition, we also characterize the bicyclic graphs with the first two largest spectral radii of $A_{f}(G)$ when $f(x,y)=\frac{1}{2}(x/y+y/x)$,
corresponding to the extended index.

\end{abstract}

\noindent
{\bf Keywords:} bicyclic graphs, weighted adjacency matrix, extremal spectral radius\par
\noindent
{\bf Mathematics Subject Classification}: 05C50, 15A18.

 \baselineskip=0.30in

\section{Introduction}
 The topological indices play an important role in QSAR/QSPR  studies  in chemical graph theory. In order to uniformly study such kinds of  indices, it is natural and meaningful to find a uniform way to deal with as many indices as possible, see \cite{Ali, Pproperty, YaoYuedan, YJC}. Among these indices, the vertex-degree-based
indices received much attention \cite{Gutman2021,LiWang2021}, the general form of which could be represented by $\sum_{ij\in E} f(d_i,d_j)$, where $f$ is a symmetric real function, $E$ is the edge set of the graph and $d_i$ is the degree of the vertex $i$.

 On the other hand, as pointed out in \cite{LiTalk2015}, a matrix weighted by a function could keep more structural information than a single index did. To this end, Das et al. \cite{WeightedMatrix},  Li and Wang \cite{LiWang2021} proposed the notion of the {\em weighted adjacency matrix}:
\begin{definition}
For a graph $G=(V,E)$, the  {\it weighted adjacency matrix}  $A_{f}(G)=[a_{ij}]$ of a graph $G$ is defined as:
 $$
a_{ij}=\left\{\begin{matrix}
f(d_i,d_j), &\text{if } ij\in E(G),\\
0,                                                            &\text{otherwise},
\end{matrix}\right.
$$
where $f(x,y)$ is  a symmetric bivariate real function.
\end{definition}

In fact, lots of partial results have been obtained in earlier articles for some special functions $f(x,y)$, corresponding to some specified indices.  For example,  Chen \cite{Chen2} characterized the extremal
trees  with the largest and smallest spectral radii of ABC matrix in  the class of trees with $n$ vertices. Li and Wang \cite{LiABCUni} determined the extremal
unicyclic graphs on $n$ vertices with the largest and smallest spectral radii of ABC matrix. Yuan and Du \cite{YuanBicyclic} characterized the extremal bicyclic graphs on $n$ vertices with the first two maximum spectral radii of ABC matrix. And  the extended adjacency matrix  \cite{Ext1, Ext2, Ext3}, Zagreb matrix \cite{Gutman1972}, Randi\'c matrix \cite{RandicMatrix}, AG matrix \cite{Ext2, AGMatrix1, AGMatrix2} and Harmonic matrix \cite{HarmonicMatrix} are also studied one by one.

However, it is not easy to determine the spectral radii of $A_f(G)$ for general  symmetric bivariate real function $f(x,y)$ \cite{LiEnergy1, LiEnergy2, LiEnergy3, LiWang2021}. For this reason,  Li and Wang \cite{LiWang2021} added some particular restrictions on $f(x,y)$ and obtained the following results for trees.
\begin{theorem}\label{Lith1}\cite{LiWang2021} Assume that $f(x,y)>0$ is a symmetric real function, increasing and convex in variable $x$. Then the tree on $n$ vertices with the largest spectral radius of $A_f(T)$ is a star or a double star.
\end{theorem}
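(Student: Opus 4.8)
The plan is to combine the Perron--Frobenius theorem with a degree-altering graph transformation, controlled through the monotonicity and convexity of $f$. Since $T$ is connected, $A_f(T)$ is a nonnegative, symmetric, irreducible matrix, so $\rho(A_f(T))$ is a simple eigenvalue with a positive eigenvector $x=(x_1,\dots,x_n)^\top$, and the eigenequation reads $\rho\, x_i=\sum_{j\sim i} f(d_i,d_j)\,x_j$ for every $i$. I would first record the elementary but crucial fact that a tree on $n\ge 2$ vertices is a star or a double star if and only if its diameter is at most $3$: a tree with exactly one non-leaf vertex is a star, and a tree with exactly two non-leaf vertices must have those two vertices adjacent, hence is a double star. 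Thus it suffices to show that no extremal tree can have diameter $\ge 4$.

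Second, assume for contradiction that an extremal tree $T$ has a longest path $v_0v_1\cdots v_d$ with $d\ge 4$, and let $x$ be its Perron vector. By maximality of the path, every neighbour of $v_1$ other than $v_2$ is a leaf, and likewise for $v_{d-1}$. I would then relocate a pendant branch from the end of the path carrying the smaller Perron weight onto an interior vertex carrying the larger Perron weight, producing a new tree $T'$ on $n$ vertices, and compare the two spectral radii through the Rayleigh quotient: normalising $x^\top x=1$ gives
\[
\rho(A_f(T'))-\rho(A_f(T))\ \ge\ x^\top\bigl(A_f(T')-A_f(T)\bigr)x .
\]
The goal is to show that the right-hand side is positive, which immediately contradicts the extremality of $T$.

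Third, and this is where the real work lies, I would estimate $x^\top(A_f(T')-A_f(T))x=\sum_{ij}\bigl(a'_{ij}-a_{ij}\bigr)x_ix_j$ by splitting the affected edges into three groups: the single edge that is actually moved, the edges incident to the vertex whose degree increases, and the edges incident to the vertex whose degree decreases. The edges at the receiving vertex contribute a nonnegative amount because $f$ is increasing in each variable, so $f(d+1,\cdot)\ge f(d,\cdot)$. The genuine obstacle is the \emph{loss} at the donor vertex, whose incident weights all drop from $f(d,\cdot)$ to $f(d-1,\cdot)$; these terms must be dominated by the gains. This is exactly where convexity enters: convexity of $f$ in its first variable makes the discrete increments $f(t+1,\cdot)-f(t,\cdot)$ nondecreasing in $t$, so transferring a unit of degree from a lower-degree vertex to a higher-degree one raises the total edge weight. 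I would therefore arrange the receiving vertex to have both the larger Perron weight and the larger degree; the latter may require an auxiliary argument relating Perron weight and degree in an extremal tree, and reconciling these two requirements edge by edge despite the degree changes on untouched edges is the step I expect to be the main difficulty.

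Finally, the strict inequality follows once the above estimate is made nonnegative: if the bound is not already strict, then $x$ would be a positive Perron vector of both $A_f(T)$ and $A_f(T')$, and comparing the two eigenequations forces $T=T'$, a contradiction. This rules out diameter $\ge 4$ and, by the first step, leaves only stars and double stars as extremal trees, completing the argument.
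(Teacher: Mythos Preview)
First, note that the paper does not itself prove this theorem; it is quoted from \cite{LiWang2021} as background. So there is no in-paper proof to compare against, and your outline has to stand on its own.

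It does not. Your step three has a genuine gap, and you essentially concede this by calling it ``the main difficulty'' without resolving it. Convexity of $f$ in its first variable tells you that $f(t{+}1,y)-f(t,y)$ is nondecreasing in $t$ \emph{for a fixed second argument $y$}. But the edges that lose weight at the donor vertex $v$ carry second arguments equal to the degrees of $v$'s neighbours, while the edges that gain weight at the receiver $u$ carry second arguments equal to the degrees of $u$'s neighbours --- an unrelated set. So even granting $d_u\ge d_v$ and $x_u\ge x_v$, there is no way to conclude
\[
\sum_{z\sim u}\bigl[f(d_u{+}1,d_z)-f(d_u,d_z)\bigr]\,x_u x_z \ \ge\ \sum_{\substack{w\sim v\\ w\neq v_0}}\bigl[f(d_v,d_w)-f(d_v{-}1,d_w)\bigr]\,x_v x_w,
\]
because neither the second arguments $d_z$ versus $d_w$ nor the Perron weights $x_z$ versus $x_w$ can be paired up. Convexity alone simply does not compare these two sums.

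Separately, you require the receiving vertex to have both the larger Perron weight \emph{and} the larger degree, and you acknowledge this ``may require an auxiliary argument'' that you do not supply. No such argument is available in general: Perron weight and degree are not monotonically related in a weighted tree, so the two requirements can conflict. A workable proof has to choose the transformation so that old and new weights can be compared on the \emph{same} edge set --- for instance by moving all of $N(v)\setminus N[u]$ at once so that $d_u+d_v$ is preserved and each affected edge $vw$ is compared directly against the new edge $uw$ with the same $d_w$ --- rather than by trading losses at one vertex against gains at a vertex with unrelated neighbours. Your one-leaf-at-a-time move does not have this structure, and as written the Rayleigh-quotient estimate cannot be closed.
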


 \begin{theorem}\label{Lith2}\cite{LiWang2021}Assume that $f(x,y)$  has a form $P(x,y)$ or $\sqrt{P(x,y)}$ , where $P(x,y)$ is a symmetric polynomial with nonnegative coefficients and zero constant term. Then the tree on $n$ $(n\geq 9)$ vertices with the smallest spectral radius of
 $A_f(T)$ is uniquely a path.
 \end{theorem}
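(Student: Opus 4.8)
The plan is to show that any tree $T$ on $n$ vertices with maximum degree at least $3$ admits another tree $T'$ on $n$ vertices with $\rho(A_f(T'))<\rho(A_f(T))$; since $P_n$ is the unique tree with maximum degree at most $2$, this will force the minimiser to be $P_n$ and, by strictness, to be unique. To begin I record the Perron--Frobenius set-up: because $T$ is connected and, for both $f=P$ and $f=\sqrt{P}$, every edge weight $f(d_i,d_j)$ is strictly positive (as $P$ has nonnegative coefficients, zero constant term and is not identically zero, one has $P(x,y)>0$ whenever $x,y\ge 1$), the matrix $A_f(T)$ is nonnegative, symmetric and irreducible. Hence $\rho(A_f(T))$ is a simple eigenvalue with a positive eigenvector and $\rho(A_f(T))=\max_{\lVert z\rVert=1}z^{\top}A_f(T)z$. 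The only property of $f$ I will use is that both $P(x,y)$ and $\sqrt{P(x,y)}$ are nondecreasing in each variable on $[1,\infty)$, which is immediate from the nonnegativity of the coefficients.

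Next I would fix the transformation. If $T$ is not a path I choose a \emph{penultimate} vertex $v$ of degree $d\ge 3$, meaning at most one component of $T-v$ contains a further vertex of degree $\ge 3$; such a $v$ exists, and then at least two components of $T-v$ are paths, so $v$ carries two pendant paths $v\,u_1\cdots u_k$ and $v\,w_1\cdots w_l$ with $k\ge l\ge 1$. I let $T'$ be obtained by deleting $vw_1$ and inserting $u_kw_1$, merging these into the single pendant path $v\,u_1\cdots u_k\,w_1\cdots w_l$. This is the degree-reducing analogue of the classical Li--Feng operation: in passing from $T$ to $T'$ the degree of $v$ drops from $d$ to $d-1$, the former leaf $u_k$ becomes a degree-$2$ vertex, and no other degree changes (here $d_{w_1}$ is the same in both graphs).

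To compare the two spectral radii I would take the positive Perron vector $y$ of $A_f(T')$, normalised so that $\rho(A_f(T'))=y^{\top}A_f(T')y$. Since $\rho(A_f(T))\ge y^{\top}A_f(T)y$, it is enough to prove
\[
 y^{\top}\bigl(A_f(T)-A_f(T')\bigr)\,y>0 .
\]
Collecting the edge contributions, this quadratic form (for $k\ge 2$) splits into three groups: the \emph{branch} terms $\sum_{z\sim v,\,z\ne w_1}2\bigl[f(d,d_z)-f(d-1,d_z)\bigr]y_v y_z$, each positive by monotonicity; the \emph{relocation} term $2\,y_{w_1}\bigl[f(d,d_{w_1})\,y_v-f(2,d_{w_1})\,y_{u_k}\bigr]$; and the single negative term $-2\bigl[f(2,2)-f(2,1)\bigr]y_{u_{k-1}}y_{u_k}$ arising from the newly internalised vertex $u_k$. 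Because the Perron entries strictly decrease along a pendant path away from the core, $y_v>y_{u_k}$, while $f(d,\cdot)>f(2,\cdot)$, so the relocation term is also positive.

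The hard part will be making precise that the two positive groups dominate the lone negative term. The clean observation is that the negative term involves only the two most peripheral entries $y_{u_{k-1}},y_{u_k}$, whereas each branch term is the product of two comparatively large (central) entries $y_v y_z$ with a fixed positive weight gap $f(d,d_z)-f(d-1,d_z)$. To turn this into an inequality I would use the eigenvalue relations $\rho(A_f(T'))\,y_{u_i}=f(2,2)\bigl(y_{u_{i-1}}+y_{u_{i+1}}\bigr)$ along the interior of the merged pendant path to force geometric decay of $y$ toward the leaf, bounding the peripheral entries against the central ones, and then dispatch the degenerate case $k=l=1$ (where $u_k$ is adjacent to $v$ and the decomposition changes) by direct expansion. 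I expect the hypothesis $n\ge 9$ to be exactly what guarantees the tree is large enough for these estimates to close in every configuration. Once the displayed inequality is secured we obtain $\rho(A_f(T'))<\rho(A_f(T))$, so no minimiser can have a vertex of degree $\ge 3$; thus the minimiser is $P_n$, and the strict decrease at each step shows it is the unique one.
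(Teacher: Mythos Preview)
The paper does not prove this theorem at all: Theorem~\ref{Lith2} is quoted from \cite{LiWang2021} as background and no argument for it appears anywhere in the present manuscript. So there is no ``paper's own proof'' to compare your proposal against.

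Turning to the proposal itself, the overall shape (pick a peripheral branch vertex, merge two pendant paths, and compare Rayleigh quotients using the Perron vector of the \emph{new} tree) is the natural analogue of the classical Lov\'asz--Pelik\'an argument for the ordinary adjacency matrix, and the set-up in your first two paragraphs is correct. The difficulty is that the quantitative core---your ``hard part''---is not actually carried out, and there are concrete reasons to doubt it will go through using only the monotonicity of $f$.

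First, the claim that ``the Perron entries strictly decrease along a pendant path away from the core'' is not automatic for weighted matrices. Along the merged pendant path in $T'$ the eigen-equation reads $\rho(A_f(T'))\,y_{u_i}=f(2,2)\bigl(y_{u_{i-1}}+y_{u_{i+1}}\bigr)$, and monotone decay toward the leaf requires $\rho(A_f(T'))>2f(2,2)$. For the ordinary adjacency matrix this is $\rho>2$, which already fails for paths; in the weighted setting you would need to establish this lower bound for $T'$ before invoking decay, and nothing in your outline does so.

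Second, you explicitly say ``the only property of $f$ I will use is that $f$ is nondecreasing in each variable''. That should be a warning sign: the theorem is stated for $f=P$ or $f=\sqrt{P}$ with $P$ a symmetric polynomial with nonnegative coefficients and zero constant term, and this restricted form almost certainly supplies quantitative control (for instance on the size of $f(d,d_z)-f(d-1,d_z)$ versus $f(2,2)-f(2,1)$, or a uniform lower bound for $\rho(A_f(T'))/f(2,2)$) that bare monotonicity does not. Your final paragraph acknowledges that you have not produced any such inequality, and the sentence ``I expect the hypothesis $n\ge 9$ to be exactly what guarantees these estimates to close'' is hope rather than proof. As written, the proposal is a plausible plan with the decisive step missing; to complete it you would need to exploit the polynomial structure of $f$ (or some consequence of it such as convexity and a growth bound) rather than monotonicity alone.
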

 For general graphs, Hu et al. \cite{Pproperty} added a stronger  restrictions on $f(x,y)$, namely the property $P$: if  $f(x_1,y_1)> f(x_2,y_2)$ when $|x_1-y_1|>|x_2-y_2|$ and $x_1+y_1=x_2+y_2$ in \cite{Pproperty}. Further, Zheng et al. \cite{ZhengRuiling} added another restriction on the symmetric bivariate real weighted function $f(x,y)$, namely the  property $P^*$: if\\
 $(i)$ $f(x,y)$ is increasing in  $x$;\\
 $(ii)$ $f(x,y)$ is convex in $x$; and\\
 $(iii)$ $f(x_1,y_1)\geq f(x_2,y_2)$ when $|x_1-y_1|>|x_2-y_2|$ and $x_1+y_1=x_2+y_2$.
\\And $A_{f}(G)$ is called the weighted adjacency matrix with property $P^*$ of $G$.

One can see that the function $f(x,y)$ with  property $P^*$ covers many indices, including the first Zagreb index  \cite{Gutman1972}: $f(x,y)=x+y$ , first hyper-Zagreb index \cite{hyperZagreb}: $f(x,y)=(x+y)^{2}$ , forgotten index \cite{ForgottenIndex}: $f(x,y)=x^2+y^2$ , general sum-connectivity index \cite{Zhou1}: $f(x,y)=(x+y)^{\alpha}$ when $\alpha\geq 1$, general Platt index \cite{Ali}: $f(x,y)=(x+y-2)^{\alpha}$ when $\alpha\geq 1$, general Sombor index \cite{GeneralSombor}: $f(x,y)= (x^{\alpha}+y^{\alpha})^{\beta}$ when $\alpha\geq 1$ and $\beta\geq 1$, exponential first Zagreb index \cite{ExpIndices}: $f(x,y)=e^{x+y}$, exponential general  sum-connectivity index \cite{ExpIndices}: $f(x,y)=e^{(x+y)^{\alpha}}$ when $\alpha\geq 1$  and even exponential general Sombor index \cite{ExpIndices}: $f(x,y)=e^{(x^{\alpha}+y^{\alpha})^{\beta}}$ when $\alpha\geq 1$ and $\beta\geq 1$.

For a graph $G$ with $n$ vertices and $m$ edges, we also call $G$ a $c$-{\it cyclic graph} where $c=m-n+1$. In particular, if $c=0$, $1$ or $2$, then $G$ is known as a tree, a unicyclic graph or a bicyclic graph, respectively. In \cite{ZhengRuiling}, Zheng et al.
determined the extremal trees with the smallest   and largest  spectral radii, and the extremal unicyclic graphs with the smallest  and  first three largest spectral radii of $A_{f}(G)$.

In this paper,  we consider bicyclic graphs. The paper is organized as follows. In the second section, we determine the unique graph with the largest spectral radius  of  $A_f(G)$  for the bicyclic graphs when $f(x,y)$ has the property $P^*$. In the third section, we characterize the  bicyclic graphs with the second largest spectral radius of  $A_f(G)$ weighted for some known indices.  In the final section,  we apply the Ruler's theorem of spectral radius  to characterize the bicyclic graphs with the first two  largest spectral radius of $A_f(G)$ when $f(x,y)=\frac{1}{2}(x/y+y/x)$.
\section{The bicyclic graphs with the largest $\rho(A_f(G))$ when $f(x,y)$ has the property $P^*$}

Firstly, we introduce some notations, terminologies and useful lemmas as follows.
Throughout the paper, $G=(V,E)$ denote a connected undirected simple
graph with $V=\{1,2,\ldots,n\}$ and
$|E|=m$.  We denote by $N(i)$ the neighbor set of vertex $i$, and $N[i]=N(i)\cup\{i\}$. As usual, we denote by  $d_i$ the
degree of $i$, i.e., $d_i=|N(i)|$. In particular, $\Delta(G)$ and $\delta(G)$, or $\Delta$ and $\delta$ for short, denote the maximum degree and minimum degree of $G$, respectively.
 The vertex $i$ is called a pendant vertex and an isolated vertex when  $d_i=1$ and $d_i=0$, respectively.
As usual, the {\it adjacency matrix} $A(G)=[a_{ij}]$ is the $n\times n$ matrix with $a_{ij}=1$ if $ij\in E$ and $a_{ij}=0$ otherwise.

 We denote by $\rho(A_{f}(G))$, or $\rho_f(G)$ for short, the spectral radius of $A_{f}(G)$, that is, the maximum absolute value of the eigenvalue of $A_{f}(G)$. In particular, we denote by $\rho(G)$ the spectral radius of the adjacency matrix of $G$.  Besides, we also denote by $\rho(M)$ the maximum eigenvalue of a square matrix $M$.
   If $G$ is connected and $f(d_i,d_j)>0$ for all vertices $i,j\in V(G)$, then by the Perron-Frobenius Theorem of non-negative irreducible matrices, $\rho_f(G)$
has multiplicity one and  $A_{f}(G)$ has a unique positive unit
eigenvector, which will be referred  as the {\it Perron vector} of $\rho_f(G)$ in the sequel.

Let $\mathcal{B}_{n}$ denote the class of all bicyclic graphs with $n$ vertices and $G\in\mathcal{B}_{n}$. The {\it base} of $G$, denoted by $\hat{G}$, is
the (unique) minimal connected bicyclic subgraph of $G$. It is easy
to see that $\hat{G}$ is the unique bicyclic subgraph of $G$
containing no pendant vertices, while $G$ can be  obtained from
$\hat{G}$ by attaching trees to some vertices of $\hat{G}$. If a tree is attached to the vertex $v$ of $\hat{G}$, then we denote the tree by $T(v)$. And we denote the star graph with $n$ vertices by $S_n$.

Let $C_{p}$  and $C_{q}$ be two vertex-disjoint cycles. Suppose that
$v\in V(C_{p})$ and  $u\in V(C_{q})$. In \cite{GuoShuguang}, Guo introduced
the $\infty$-graph  {\it $B(p,l,q)$} (Figure 1), which is  arisen from $C_{p}$
and $C_{q}$ by joining $v$ and $u$ by a path $(v=)w_{1}w_{2}\cdots
w_{l}(=u)$ of length $l-1$. Specially, $v=u$ when $l=1$ and  the path is the edge $vu$ when $l=2$.\par
Let $P_{p+1}$, $P_{l+1}$ and $P_{q+1}$ be three vertex-disjoint
paths, where $p, l, q\geq1$, $l=min\{p,l,q\}$ and at most one of them is $1$.
Identifying the three initial vertices and terminal vertices of
them, respectively, the resulting graph (Figure 1),  denoted by {\it
$P(p,l,q)$}, is called a $\theta$-graph in  \cite{GuoShuguang}.

\begin{figure}[htb]
\centering
\vspace*{1cm}\setlength{\belowcaptionskip}{0.1cm}
\includegraphics[width=0.65\textwidth]{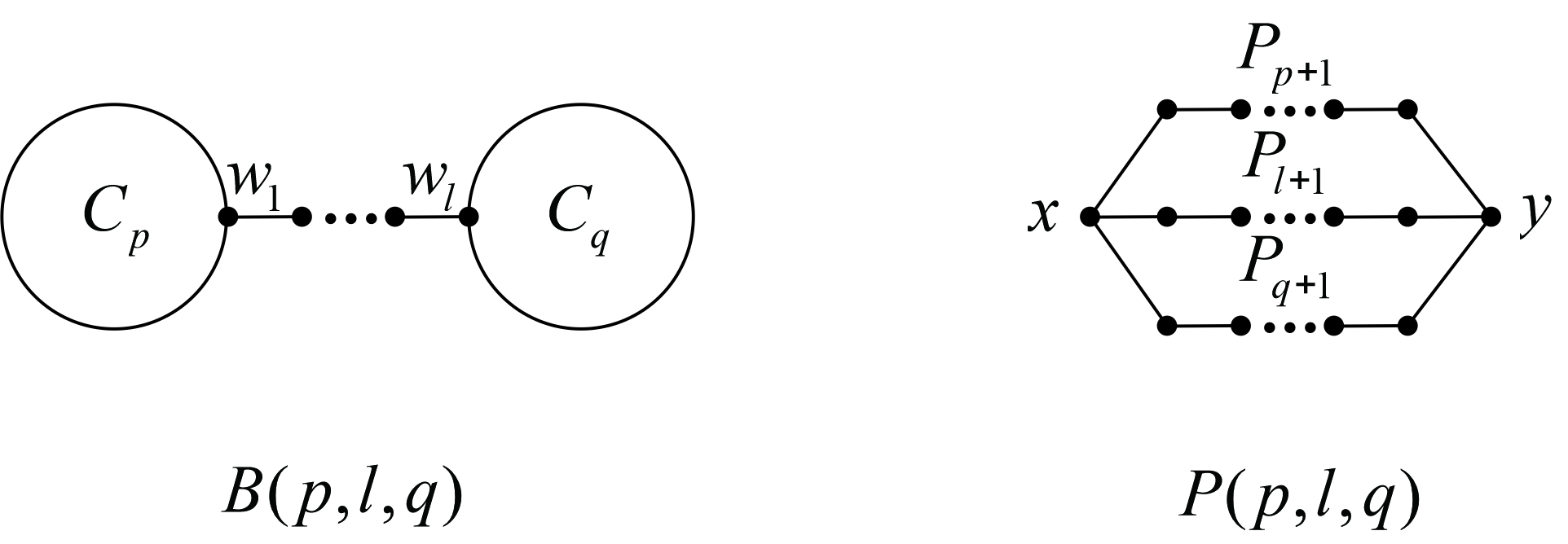}
\caption{The graphs $B(p,l,q)$ and $P(p,l,q)$.}\label{1f}
\end{figure}

 Now we define
the following two kinds of bicyclic graphs with  $n$ vertices :\hspace*{10pt} $\mathcal{B}^{*}(n)=\{G\in
\mathcal{B}(n)|$ $\hat{G}=B(p,l,q)\},$ \hspace*{10pt}
$\mathcal{B}^{**}(n)=\{G\in \mathcal{B}(n)|$ $\hat{G}=P(p,l,q)\}.$
Clearly, we have $\mathcal{B}(n)=\mathcal{B}^{*}(n) \cup\mathcal{B}^{**}(n)$.

 Let $G_1$ be the graph on $n$ vertices
obtained from $P(2,1,2)$ by joining  $n-4$ isolated vertices to the  vertex of degree $3$, $G_2$ be the graph on $n$ vertices
obtained from $B(3,1,3)$ by joining $n-5$ isolated vertices to the  vertex of degree $4$. Graphs $G_1$ and $G_2$ are shown in Figure \ref{fig2}.

\begin{figure}[htb]
\centering
\vspace*{1cm}\setlength{\belowcaptionskip}{0.1cm}
\includegraphics[width=0.45\textwidth]{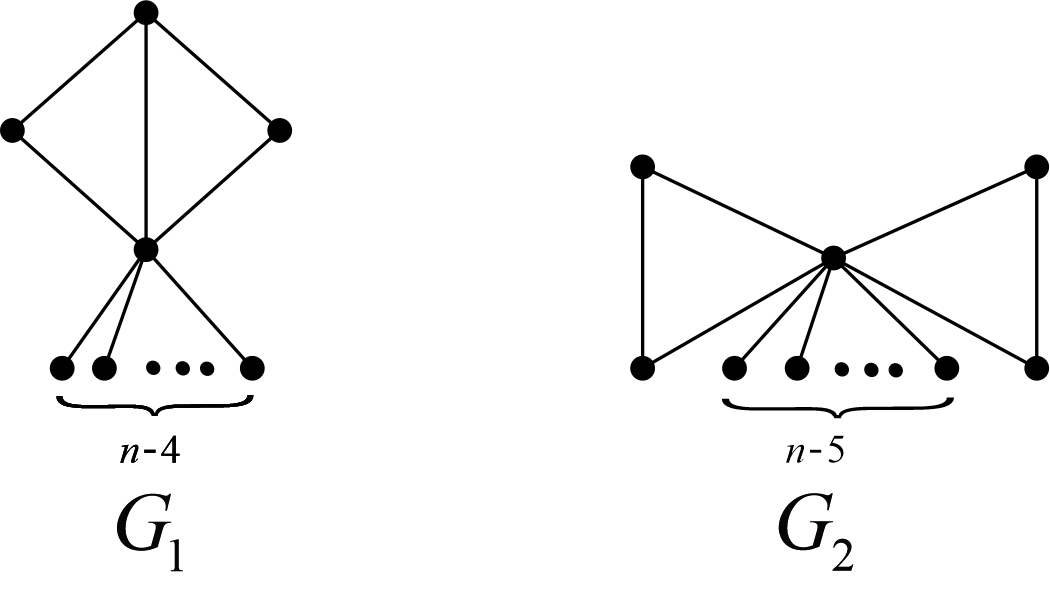}
\caption{The bicyclic graphs $G_1$ and $G_2$ .} \label{fig2}
\end{figure}

 We will show in the following that $G_2$ and $G_1$ are the graphs with maximal spectral radius of weighted adjacency matrices in $\mathcal{B}^{*}(n)$ and $\mathcal{B}^{**}(n)$, respectively. Furthermore, we show that $G_1$ is the unique graph with  the largest spectral radius  of weighted adjacency matrices in $\mathcal{B}(n)$.

\begin{definition} \label{d2.1} \cite{Kel1981}
Let $u$ and $v$ be two vertices of the graph $G$. The Kelmans operation from $u$ to $v$ is defined as follows: Delete the edge $uw$ and add a new edge $vw$ for all vertices $w\in N(u)-N[v]$.
\end{definition}

Notice that the two new graphs obtained from the same graph by Kelmans operation from $u$ to $v$ and from  $v$ to $u$ are isomorphic. So we usually call it "using the Kelmans operation on $u$ and $v$" in the sequel.

  \begin{lemma}\label{l2.1} \cite{BroSpe} Let $A$ be an $n\times n$ real symmetric matrix. Then the largest eigenvalue  $\lambda_1(A)= max_{||{\bf x}||=1} {\bf x}^{\top }A{\bf x}$, where ${\bf x} \in \mathbb{R}^{n}$.
\end{lemma}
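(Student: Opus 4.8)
The plan is to prove this via the spectral theorem for real symmetric matrices, since Lemma~\ref{l2.1} is exactly the Rayleigh--Ritz variational characterization of the top eigenvalue. First I would invoke the spectral theorem to diagonalize $A$ in an orthonormal eigenbasis: there exist real eigenvalues $\lambda_1\geq\lambda_2\geq\cdots\geq\lambda_n$ with $\lambda_1=\lambda_1(A)$, together with corresponding eigenvectors ${\bf v}_1,\ldots,{\bf v}_n$ satisfying $A{\bf v}_i=\lambda_i{\bf v}_i$, that form an orthonormal basis of $\mathbb{R}^n$. This is the essential input, and everything else is bookkeeping.

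Next, for an arbitrary unit vector ${\bf x}\in\mathbb{R}^n$ I would expand it in this basis as ${\bf x}=\sum_{i=1}^{n} c_i{\bf v}_i$, where orthonormality gives $\sum_{i=1}^{n} c_i^2=\|{\bf x}\|^2=1$. Using $A{\bf v}_i=\lambda_i{\bf v}_i$ together with the orthonormality relations ${\bf v}_i^{\top}{\bf v}_j=0$ for $i\neq j$ and ${\bf v}_i^{\top}{\bf v}_i=1$, a direct computation yields the quadratic form
$$
{\bf x}^{\top}A{\bf x}=\sum_{i=1}^{n}\lambda_i c_i^2.
$$
Since each $c_i^2\geq 0$ and $\lambda_i\leq\lambda_1$ for all $i$, this is bounded above by ${\bf x}^{\top}A{\bf x}\leq\lambda_1\sum_{i=1}^{n} c_i^2=\lambda_1$, so $\lambda_1$ is an upper bound for the quadratic form over the entire unit sphere.

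Finally, I would verify that the bound is actually attained: choosing ${\bf x}={\bf v}_1$, which is itself a unit vector, gives ${\bf v}_1^{\top}A{\bf v}_1=\lambda_1$. Hence $\max_{\|{\bf x}\|=1}{\bf x}^{\top}A{\bf x}=\lambda_1=\lambda_1(A)$, as claimed. There is no genuine obstacle in this argument, as the result is classical; the only point deserving a word of care is the existence of a maximizer on the unit sphere, which is guaranteed because the sphere is compact and the map ${\bf x}\mapsto{\bf x}^{\top}A{\bf x}$ is continuous, so that the supremum is a genuine maximum and is realized by the leading eigenvector ${\bf v}_1$.
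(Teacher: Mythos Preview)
Your argument is correct and is the standard Rayleigh--Ritz proof via the spectral theorem. Note, however, that the paper does not give its own proof of Lemma~\ref{l2.1} at all: the lemma is simply quoted from \cite{BroSpe} as a known fact, so there is no proof in the paper to compare against.
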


 \begin{lemma} \cite{ZhengRuiling} \label{Kel}
 Let $G$ be a connected graph and $G'$ be the graph after a Kelmans operation on $G$.
 If $f(x,y)$ has the property $P^*$ and $G\not \cong G^{'}$, then $\rho(A_{f}(G^{'}))>\rho(A_{f}(G))$.
\end{lemma}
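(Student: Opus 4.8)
The plan is to combine the variational principle of Lemma~\ref{l2.1} with the Perron vector of $A_f(G)$, and to bound the change of the edge--weights caused by the altered degrees through the three clauses of property $P^*$.

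First I would fix the Perron vector $\mathbf{x}=(x_1,\dots,x_n)^{\top}>0$ of $A_f(G)$, normalised so that $\|\mathbf{x}\|=1$; then $\rho(A_f(G))=\mathbf{x}^{\top}A_f(G)\mathbf{x}$. Since the Kelmans operation from $u$ to $v$ and from $v$ to $u$ yield isomorphic graphs, I may assume it moves edges onto the endpoint with the larger Perron weight, i.e.\ $x_v\ge x_u$. By Lemma~\ref{l2.1}, $\rho(A_f(G'))\ge\mathbf{x}^{\top}A_f(G')\mathbf{x}$, so it suffices to prove $\mathbf{x}^{\top}A_f(G')\mathbf{x}\ge\mathbf{x}^{\top}A_f(G)\mathbf{x}$ and that equality is impossible.

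Next I would record the degree changes. Writing $S=N(u)\setminus N[v]$, $R=N(v)\setminus N[u]$, $C=N(u)\cap N(v)$ and $k=|S|$ (here $k\ge1$ since $G\not\cong G'$), the operation deletes every edge $uw$ with $w\in S$ and inserts $vw$; only $d_u$ and $d_v$ change, to $d_u-k$ and $d_v+k$. The two facts I would use repeatedly are $d_v+k-d_u=|R|\ge0$ and $d_v-(d_u-k)=|R|\ge0$. Expanding $\mathbf{x}^{\top}A_f(G')\mathbf{x}-\mathbf{x}^{\top}A_f(G)\mathbf{x}$ as a sum over the affected edges, I would verify nonnegativity group by group: (a) the edges from $v$ to $R$ and the moved edges $vw$ ($w\in S$) are controlled by monotonicity (clause (i)) together with $x_v\ge x_u$ and $d_v+k\ge d_u$; (b) the edge $uv$, if present, has its degree pair replaced by one of equal sum $d_u+d_v$ but larger absolute difference, $|R|+k\ge\bigl|\,|R|-k\,\bigr|$, so clause (iii) applies; (c) at a common neighbour $w\in C$ the weight of $uw$ decreases while that of $vw$ increases, and the net change $\bigl[f(d_v+k,d_w)-f(d_v,d_w)\bigr]x_vx_w-\bigl[f(d_u,d_w)-f(d_u-k,d_w)\bigr]x_ux_w$ must be shown to be $\ge0$. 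Group (c) is the crux: I would invoke convexity (clause (ii)) to compare the two equal--length increments of $t\mapsto f(t,d_w)$ over $[d_v,d_v+k]$ and $[d_u-k,d_u]$, which favours the right--hand interval precisely because $d_v\ge d_u-k$, and then use $x_v\ge x_u$ to absorb the Perron factors.

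It then remains to upgrade $\ge$ to $>$. I would argue by contradiction: if $\rho(A_f(G'))=\rho(A_f(G))$, then by Lemma~\ref{l2.1} the vector $\mathbf{x}$ maximises the Rayleigh quotient of $A_f(G')$, hence is an eigenvector of $A_f(G')$ for its largest eigenvalue $\rho:=\rho(A_f(G'))$; as $\mathbf{x}>0$, it is simultaneously a Perron vector of $A_f(G)$ and of $A_f(G')$ for the common eigenvalue $\rho=\rho(A_f(G))$. Subtracting the two eigen-equations at vertex $v$ then gives
\[
0=\sum_{w\in C\cup R}\bigl[f(d_v+k,d_w)-f(d_v,d_w)\bigr]x_w+\sum_{w\in S}f(d_v+k,d_w)x_w+\varepsilon,
\]
where $\varepsilon=\bigl[f(d_v+k,d_u-k)-f(d_v,d_u)\bigr]x_u$ when $uv\in E$ and $\varepsilon=0$ otherwise. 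The first sum and $\varepsilon$ are nonnegative by clauses (i) and (iii), whereas the middle sum is strictly positive because $S\neq\varnothing$ and $f>0$ on edges; this contradiction forces strictness. I expect group (c) to be the main obstacle, as it is the only place where a genuine loss at $u$ must be dominated by a gain at $v$, and it is exactly there that convexity, the degree identity $d_v\ge d_u-k$, and the Perron ordering $x_v\ge x_u$ have to be used at once.
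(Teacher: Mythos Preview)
In this paper the lemma is quoted from \cite{ZhengRuiling} without proof, so there is no in-paper argument to compare against. Your proposal is correct and is exactly the standard Rayleigh--quotient/Perron--vector proof one expects: the degree identities $d_v+k-d_u=d_v-(d_u-k)=|R|\ge 0$ are right, groups (a), (b), (c) are correctly controlled by monotonicity, clause~(iii), and convexity (the increment inequality for convex $t\mapsto f(t,d_w)$) respectively, the relabeling to secure $x_v\ge x_u$ is legitimate because the two directions of the Kelmans operation give isomorphic outputs and $G\not\cong G'$ guarantees $k\ge 1$ after relabeling, and the strictness step via the eigen-equation at $v$ is sound since the $S$-sum is strictly positive.
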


\begin{lemma} \cite{ZhengRuiling} \label{PendentVertex}
Let $v$ and $u$ be two vertices in graph $G$ such that the vertices in $N_1=N(v)-N[u]$ and $N_2=N(u)-N[v]$ are all pendent vertices and $1\leq |N_1|\leq |N_2|$. Let $w\in N_1$, $G'=G-vw+uw$. If $f(x,y)$ has the property $P^*$ and $G\not \cong G^{'}$,  then $\rho(A_{f}(G^{'}))>\rho(A_{f}(G))$.
\end{lemma}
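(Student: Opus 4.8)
The plan is to use the variational characterization of Lemma \ref{l2.1}. Let $\rho := \rho(A_f(G))$ and let $\mathbf{x}$ be the positive unit Perron vector of $A_f(G)$ (it exists and is positive since $G$ is connected and $f>0$ on edges), so that $\rho = \mathbf{x}^{\top}A_f(G)\mathbf{x}$ and, by Lemma \ref{l2.1}, $\rho(A_f(G')) \geq \mathbf{x}^{\top}A_f(G')\mathbf{x}$. Hence it suffices to establish the strict inequality $\Delta := \mathbf{x}^{\top}\big(A_f(G')-A_f(G)\big)\mathbf{x} > 0$. The only degrees that change are those of $u$ and $v$: writing $d_u,d_v$ for the degrees in $G$, we have $d_u - d_v = |N_2|-|N_1|\geq 0$, while in $G'$ the degree of $v$ drops to $d_v-1$ and that of $u$ rises to $d_u+1$ (edges incident to neither $u$ nor $v$ are unaffected and contribute $0$ to $\Delta$). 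Since $d_u\ge d_v$, the degree multiset strictly changes and cannot be matched, so $G\not\cong G'$ holds automatically; the stated hypothesis $G\not\cong G'$ is there only to parallel the phrasing of Lemma \ref{Kel}.

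The key preliminary step, and the main obstacle, is to prove that the Perron entries satisfy $x_u\geq x_v$. I would derive this from the eigenvalue equations. Because every vertex of $N_1,N_2$ is pendant, each such neighbor $p$ of $v$ obeys $x_p=f(d_v,1)x_v/\rho$ and each neighbor $q$ of $u$ obeys $x_q=f(d_u,1)x_u/\rho$. Substituting these into $\rho x_u$ and $\rho x_v$, using $f(d_u,d_v)=f(d_v,d_u)$, and subtracting yields an identity of the form $L\,x_u = M\,x_v + S$, where $S=\sum_{z\in C}\big(f(d_u,d_z)-f(d_v,d_z)\big)x_z\geq 0$ (here $C=N(u)\cap N(v)$, and $S\geq0$ by monotonicity of $f$ together with $d_u\ge d_v$), while the ``diagonal'' coefficients satisfy $M\geq L\geq 0$ because $|N_2|f(d_u,1)^2\geq |N_1|f(d_v,1)^2$; connectedness of $G$ rules out $L=0$ (otherwise $u$ would have only pendant neighbors). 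Thus $L\,x_u = M\,x_v+S\geq L\,x_v$, giving $x_u\geq x_v$.

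With $x_u\geq x_v$ in hand I would expand $\Delta$ edge by edge, grouping the changed edges into: the transferred pendant $w$; the remaining pendants of $v$; the pendants of $u$; the common neighbors $C$; and, if present, the edge $uv$. For $z\in C$ the contribution equals $\big(g\,x_u-\ell\,x_v\big)x_z$ with $\ell=f(d_v,d_z)-f(d_v-1,d_z)$ and $g=f(d_u+1,d_z)-f(d_u,d_z)$; convexity of $f$ in its first variable gives $g\geq\ell\geq0$ (the increment is taken on $[d_u,d_u+1]$, which lies to the right of $[d_v-1,d_v]$), so with $x_u\geq x_v$ each such term is nonnegative. The $uv$-contribution is $\big(f(d_v-1,d_u+1)-f(d_v,d_u)\big)x_ux_v$, which is nonnegative by condition $(iii)$ of property $P^{*}$, since the two argument pairs have equal sum $d_u+d_v$ while $|(d_v-1)-(d_u+1)|=(d_u-d_v)+2 > d_u-d_v=|d_v-d_u|$.

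Finally, for the pendant contributions I would again eliminate the pendant entries via $x_p=f(d_v,1)x_v/\rho$ and $x_q=f(d_u,1)x_u/\rho$, reducing them to $\tfrac{1}{\rho}T$. Writing $p_v=f(d_v,1)$, $p_u=f(d_u,1)$, $p_v^{-}=f(d_v-1,1)$, $p_u^{+}=f(d_u+1,1)$,
\[
T = p_v x_v\,\big(p_u^{+}x_u - p_v x_v\big)\;-\;(|N_1|-1)\,p_v x_v^{2}\,\big(p_v - p_v^{-}\big)\;+\;|N_2|\,p_u x_u^{2}\,\big(p_u^{+}-p_u\big).
\]
The first summand is strictly positive, as $p_u^{+}>p_v$ (monotonicity, $d_u+1>d_v$) and $x_u\geq x_v>0$; the negative middle summand is dominated by the third, using convexity $\big(p_u^{+}-p_u\geq p_v-p_v^{-}\big)$, the count bound $|N_2|\geq|N_1|-1$, monotonicity $p_u\geq p_v$, and $x_u\geq x_v$. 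Hence $T>0$, and since the common-neighbor and $uv$-edge contributions are nonnegative, $\Delta>0$, which yields $\rho(A_f(G'))>\rho(A_f(G))$. I expect the delicate points to be the proof of $x_u\geq x_v$ and the bookkeeping in $T$, where convexity and property $P^{*}$ must be invoked precisely at the places where the individual signs would otherwise be ambiguous.
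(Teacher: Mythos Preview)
The paper does not give its own proof of this lemma; it is quoted from \cite{ZhengRuiling} without argument, so there is no in-paper proof to compare against.

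Your Rayleigh-quotient approach is the standard one and the computations are correct. The identity $Lx_u=Mx_v+S$ you extract from the eigen-equations indeed holds with $L=\rho-|N_2|f(d_u,1)^2/\rho+[uv\in E]f(d_u,d_v)$ and $M=\rho-|N_1|f(d_v,1)^2/\rho+[uv\in E]f(d_u,d_v)$; your check that $M\ge L>0$ (the latter by connectedness) and $S\ge0$ gives $x_u\ge x_v$ as claimed. The edge-by-edge decomposition of $\Delta$ is accurate, and the inequalities you invoke (convexity for the common-neighbour increments $g\ge\ell\ge0$, condition~(iii) for the $uv$-edge, and the comparison of the three summands of $T$) are all valid.

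The one soft spot is the \emph{strict} inequality. Your assertion that ``the first summand is strictly positive, as $p_u^{+}>p_v$'' uses strict monotonicity of $f$, whereas condition~(i) of property $P^{*}$ says only ``increasing''. If $f(\cdot,1)$ happens to be constant on the relevant range and $|N_1|=|N_2|$, then $x_u=x_v$, $p_u^{+}=p_v$, and every term in $\Delta$ vanishes. The result still follows, but via an extra line: the $u$-coordinate of $(A_f(G')-A_f(G))\mathbf{x}$ equals
\[
p_u^{+}x_w+\sum_{q\in N_2}(p_u^{+}-p_u)x_q+\sum_{z\in C}\big(f(d_u{+}1,d_z)-f(d_u,d_z)\big)x_z+[uv\in E]\big(f(d_u{+}1,d_v{-}1)-f(d_u,d_v)\big)x_v\ \ge\ p_u^{+}x_w>0,
\]
so $\mathbf{x}$ is not an eigenvector of $A_f(G')$, and hence $\rho(A_f(G'))>\mathbf{x}^{\top}A_f(G')\mathbf{x}\ge\rho(A_f(G))$ by the strict form of the Rayleigh principle for irreducible nonnegative matrices. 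Alternatively, simply record that ``increasing'' is meant strictly (as in all the listed examples); then your argument is complete as written.
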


\begin{lemma}\label{thB1} Let $G$ be a graph in $\mathcal{B}^{*}(n)$, where $n\geq 5$. If $f(x,y)$ has the property $P^*$, then $\rho(A_{f}(G))\leq \rho(A_{f}(G_2))$,
and the equality holds if and only if $G\cong G_2$.
\end{lemma}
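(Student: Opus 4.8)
The plan is to fix a maximizer $G^{*}$ of $\rho(A_f(\cdot))$ over the finite set $\mathcal B^{*}(n)$ and to show that if $G^{*}\not\cong G_2$ then there is a Kelmans operation (Lemma~\ref{Kel}) or a pendant-relocation (Lemma~\ref{PendentVertex}) that produces another graph in $\mathcal B^{*}(n)$ with strictly larger spectral radius, contradicting maximality. Hence $G^{*}\cong G_2$, which gives both the bound and the uniqueness. The key point to monitor throughout is that every move I use keeps the two cycles meeting in a \emph{single shared vertex} and never in a shared edge, so the base stays of $\infty$-type and the graph remains inside $\mathcal B^{*}(n)$ rather than slipping into $\mathcal B^{**}(n)$.

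First I would flatten the attached trees. Write $\hat{G^{*}}=B(p,l,q)$. If some attached tree $T(v)$ contains a vertex at distance $\ge 2$ from its root $v$, take a deepest such vertex $u$ and its parent $u'$ (the neighbor of $u$ closer to the base); the Kelmans operation on $u$ and $u'$ moves all descendants of $u$ up to $u'$, and by Lemma~\ref{Kel} this strictly increases $\rho(A_f)$ while strictly decreasing the total distance from the tree-vertices to the base. Iterating, I may assume in $G^{*}$ that every attached tree consists only of pendant edges hanging directly on base vertices.

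Next I would reduce the base to $B(3,1,3)$. To shrink a cycle $C_p$ with $p\ge 4$ I apply a single Kelmans operation at two cycle vertices: for $p\ge 5$ the operation from $v_3$ to $v_1$ deletes $v_3v_4$ and adds $v_1v_4$, turning $C_p$ into a cycle of length $p-2$ together with a pendant path, while for $p=4$ the operation on two adjacent cycle vertices turns $C_4$ into a triangle with one extra pendant; each is a genuine (non-identity) Kelmans move, so Lemma~\ref{Kel} gives a strict increase, and the freed vertices become tree-vertices that I re-flatten as above. Repeating on $C_p$ and $C_q$ makes both cycles triangles. To collapse the connecting path to a single shared vertex (the case $l=1$), I apply the Kelmans operation on the two path-endpoints $v\in C_p$ and $u\in C_q$: this transfers the two triangle-neighbors of $u$ onto $v$, so the two triangles come to share exactly the vertex $v$ while $u$ is demoted to a pendant on $v$; crucially the two cycles still meet in only one vertex, so the graph stays of $\infty$-type. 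After finitely many such steps the base of $G^{*}$ is $B(3,1,3)$ and all other vertices are pendants.

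Finally I would concentrate the pendants. Let $c$ be the degree-$4$ vertex shared by the two triangles; any pendant sitting on a triangle vertex $a\neq c$ can be relocated to $c$ by the Kelmans operation from $a$ to $c$ (here $N(a)-N[c]$ is exactly the set of pendants of $a$, since the third triangle vertex already lies in $N[c]$), which by Lemma~\ref{Kel} strictly increases $\rho(A_f)$. Thus in $G^{*}$ all $n-5$ pendants must hang on $c$, whose degree is then $n-1$, and $G^{*}$ is precisely $B(3,1,3)$ with $n-5$ pendants at its degree-$4$ vertex, i.e.\ $G^{*}\cong G_2$. The main obstacle is the cyclic part: one must check that each cycle-shrinking and path-collapsing step is always realizable as a Kelmans operation yielding a non-isomorphic graph (so that the strict inequality of Lemma~\ref{Kel} applies) and that no such step ever forces the two cycles to share an edge, which would drop the graph into $\mathcal B^{**}(n)$; the accompanying bookkeeping, that every discarded cycle- or path-vertex reappears as a pendant which can be re-flattened and then pushed onto $c$, is the delicate part of the argument, whereas the tree-flattening and pendant-concentration steps are routine consequences of Lemmas~\ref{Kel} and~\ref{PendentVertex}.
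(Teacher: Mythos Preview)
Your overall strategy---fix a maximizer in $\mathcal B^{*}(n)$ and reduce it to $G_2$ by Kelmans operations---is exactly the paper's, and your cycle-shrinking and final pendant-concentration steps are sound. But two of your moves fail as written. In the tree-flattening step, your ``deepest vertex $u$ at distance $\ge 2$'' is necessarily a leaf, so it has no descendants; the Kelmans operation on the pair $u,u'$ is then the identity in one direction and, in the other, merely swaps the roles of $u$ and $u'$, giving an isomorphic graph---so Lemma~\ref{Kel} cannot be invoked and nothing is gained. The paper instead works from the root outward: it applies Kelmans among the non-pendant neighbours of the current root to push everything onto a single branch, producing a caterpillar, and then collapses the backbone by Kelmans on consecutive backbone vertices.

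More seriously, your path-collapsing move is only correct for $l\le 3$. When $l\ge 4$ the endpoints $v=w_1$ and $u=w_l$ are non-adjacent and share no common neighbour, so the Kelmans operation from $u$ to $v$ deletes \emph{every} edge incident with $u$ (including $uw_{l-1}$) and leaves $u$ isolated; the resulting graph is disconnected and no longer lies in $\mathcal B^{*}(n)$, so the maximality contradiction collapses. Your description ``$u$ is demoted to a pendant on $v$'' matches only the case $l=2$. The paper avoids this by shrinking the path one unit at a time via Kelmans on the \emph{adjacent} pair $w_1,w_2$, which takes $B(3,l,3)$ to $B(3,l-1,3)$ while keeping the graph connected at every stage.
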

\begin{proof} Choose $G\in \mathcal{B}^{*}(n)$ such that $\rho(A_{f}(G))$ is as large as possible. Suppose $\hat{G}=B(p,l,q)$ where $C_{p}=v_{1}v_{2}\cdots v_{p}v_{1}$, $C_{q}=u_{1}u_{2}\cdots u_{q}u_{1}$, and $w_{1}w_{2}\cdots w_{l-1}w_{l}$ is the unique path with length $l-1$ from $w_{1}=v_{1}\in V(C_{p})$ to  $w_{l}=u_{1}\in V(C_{q})$.

We first prove that $p=q=3$. Otherwise, without loss of generality, we suppose $p>3$.
Then we use Kelmans operation of $G$ on vertices $v_{1}$ and $v_{2}$ to obtain $G^{'}$. Note that $G^{'}\in \mathcal{B}^{*}(n)$, $\hat{G^{'}}=B(p-1,l,q)$ and  $\rho(A_{f}(G))<\rho(A_{f}(G^{'}))$ by Lemma \ref{Kel}, a contradiction. By repeating this process and we can conclude that $p=3$. Similarly, $q=3$.

Secondly, we prove that $d_{v_{2}}=d_{v_{3}}=d_{u_{2}}=d_{u_{3}}=2$ in $G$. By symmetry, we just need to prove $d_{v_{2}}=d_{v_{3}}=2$ in $G$. On the contrary, we suppose one tree $T$ is attached to $v_2$(resp. $v_3$), then we use Kelmans operation of $G$ on vertices $v_{1}$ and $v_{2}$(resp. $v_3$) to obtain $G^{'}$.
Note that $G^{'}\in \mathcal{B}^{*}(n)$, $\hat{G^{'}}=B(3,l,3)$, tree $T$ is not attached to $v_2$(resp. $v_3$) but $v_1$ in $G^{'}$, and  $\rho(A_{f}(G))<\rho(A_{f}(G^{'}))$ by Lemma \ref{Kel}, a contradiction. For convenience, denote the two trees attached to  $v_1$ and  $u_1$ by $T(v_1)$ and $T(u_1)$, respectively.

Moreover, we claim that $l=1$. Assume, on the contrary, that $l>1$. We use Kelmans operation of $G$ on vertices $w_{1}$ and $w_{2}$ to obtain $G^{'}$. Note that $G^{'}\in \mathcal{B}^{*}(n)$, $\hat{G^{'}}=B(3,l-1,3)$ if $l\geq 3$ and  $\rho(A_{f}(G))<\rho(A_{f}(G^{'}))$ by Lemma \ref{Kel}, a contradiction. And specially if $l=2$, after using Kelmans operation of $G$ on vertices $v_{1}$ and $u_{1}$ to obtain $G^{'}$,  the attached tree $T(u_1)$ becomes attaching to vertex $v_1$ in $G^{'}$. Therefore, there is at most one tree attaching to the vertex $v_{1}=u_{1}$ in the graph $G$ with maximal $\rho(A_{f}(G))$ and we denote this tree by $T^{'}=T^{'}(v_1)$ from now on. Now we know $G$ can be obtained from  the graph $B(3,1,3)$ by attaching  the tree $T^{'}$ to the vertex $v_1$.

Finally, we show that $T^{'}(v_1)\cong S_{n-4}$ and degree of $v_1$ in $T^{'}(v_1)$ is $n-5$. For convenience, we see the vertex $r=v_1$ as the root vertex of $T^{'}(v_1)$. On the contrary, there exists a vertex $r'\in N(r)\cap V(T^{'})$ such that $r'$ is not a pendent vertex, we use the Kelmans operation of $G$ on vertex $r'$ and other non-pendent vertices in $N(r)\cap V(T^{'})$ one by one until all vertices except $r'$ in $N(r)\cap V(T^{'})$ are pendent. Now we obtain a new graph $G^{'}\in \mathcal{B}^{*}(n)$ and the correponding new attaching tree $T^{''}=T^{''}(v_1)$. Replace $T^{'}$ by the smaller tree induced by $V(T^{''})-N[r]+{r'}$ and replace $r$ by $r'$, and repeat the process recursively. The whole process ends up with a graph obtained from $B(3,1,3)$ by attaching a caterpillar tree to $v_1$. Suppose the backbone of this caterpillar tree is path $(v_1=)x_1x_2\cdots x_t$, then we use Kelmans operation on vertices $x_i$ and $x_{i-1}$ from $i=t$ to $i=2$ and we would obtain the graph $G_2$. By Lemma \ref{Kel}, $\rho(A_{f}(G))< \rho(A_{f}(G_2))$, a contradiction.

Combining above arguments, we have $G\cong G_2$. This completes the proof.
\end{proof}

\begin{lemma}\label{thB2} Let $G$ be a graph in $\mathcal{B}^{**}(n)$, where $n\geq 4$. If $f(x,y)$ has the property $P^*$, then $\rho(A_{f}(G))\leq \rho(A_{f}(G_1))$,
and the equality holds if and only if $G\cong G_1$.
\end{lemma}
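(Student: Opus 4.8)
The plan is to run the same extremal argument as in Lemma~\ref{thB1}, with the $\infty$-base $B(p,l,q)$ replaced by the $\theta$-base $P(p,l,q)$, and with every improving move realised by a Kelmans operation so that the strict monotonicity in Lemma~\ref{Kel} can be invoked. Choose $G\in\mathcal{B}^{**}(n)$ with $\rho(A_f(G))$ as large as possible and write $\hat G=P(p,l,q)$, where the two degree-$3$ vertices $a$ and $b$ are joined by three internally disjoint paths of lengths $p,l,q$. Since $G$ is extremal, \emph{any} Kelmans operation that produces a non-isomorphic graph in $\mathcal{B}^{**}(n)$ yields a contradiction; the whole proof consists of showing that, unless $G\cong G_1$, such an operation exists.

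First I would shrink the base to $P(2,1,2)$. For a path of length at least $2$ joining $a$ and $b$, apply the Kelmans operation on the junction $a$ and its neighbour $x_1$ on that path. If the path has length $\ge 3$, the vertex $x_2$ lies outside $N[a]$, so the operation deletes $x_1x_2$, adds $ax_2$, and thereby shortens the path by one while leaving $\hat G$ a $\theta$-graph; if the path has length $2$ it only transplants any tree hanging at the internal vertex onto $a$, and does nothing at all once $a$ and $b$ are already adjacent. I would verify that each such move keeps $G'\in\mathcal{B}^{**}(n)$ and, by Lemma~\ref{Kel}, strictly increases $\rho_f$ whenever $G'\not\cong G$. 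Because $\rho_f$ strictly increases at every genuine step, the process terminates; I then argue that the terminal configuration is forced to be $P(2,1,2)$: every path has length $\le 2$ (a length-$\ge 3$ path always admits a shortening move), there is a direct edge $ab$ (otherwise operating on a length-$2$ path would create one and strictly increase $\rho_f$), there is only one such edge (simplicity forbids a second), and the two remaining internal vertices carry no tree (otherwise they could be emptied onto $a$). Hence $\hat G=P(2,1,2)$ and every pendant tree is rooted at $a$ or $b$.

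Next I would consolidate the trees onto one junction. The Kelmans operation on $a$ and $b$ has $N(a)\setminus N[b]$ equal exactly to the neighbours of $a$ inside its attached tree $T(a)$, so it transplants all of $T(a)$ onto $b$ and, by Lemma~\ref{Kel}, strictly increases $\rho_f$ unless $T(a)$ is empty. Thus in the extremal graph only one junction, say $b$, carries a tree. Finally I would make this tree a star centred at $b$ by exactly the recursive caterpillar argument closing Lemma~\ref{thB1}: push every non-pendant neighbour of the current root toward $b$ until the tree is a caterpillar, then apply Kelmans operations along its backbone; alternatively, once the tree is a set of pendants, Lemma~\ref{PendentVertex} moves them all to $b$. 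The outcome is $P(2,1,2)$ with $n-4$ pendants at a degree-$3$ vertex, that is $G\cong G_1$, and the strict inequalities above make $G_1$ the unique maximiser.

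I expect the main obstacle to be the termination analysis in the first step: one must check with some care that the path-shrinking operations never create a multi-edge or leave the class $\mathcal{B}^{**}(n)$, and that the halting graph is \emph{forced} to have exactly one direct edge together with two degree-$2$ internal vertices, so that no longer $\theta$-base can be terminal. By contrast, the consolidation and star-formation steps are close to verbatim copies of the corresponding arguments in Lemma~\ref{thB1} and Lemma~\ref{PendentVertex}.
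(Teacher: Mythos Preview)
Your proposal is correct and follows essentially the same approach as the paper: both argue on an extremal $G\in\mathcal{B}^{**}(n)$, use Kelmans operations (Lemma~\ref{Kel}) on a junction vertex and its neighbour along each internal path to force $\hat G=P(2,1,2)$, then Kelmans on the two junctions to collect all attached trees at one of them, and finally invoke the caterpillar/star argument from Lemma~\ref{thB1}. The only cosmetic difference is that the paper phrases each step as a direct contradiction to maximality rather than as a terminating improvement process, but the underlying moves and their justification are identical.
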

\begin{proof} Choose $G\in \mathcal{B}^{**}(n)$ such that $\rho(A_{f}(G))$ is as large as possible. Suppose $\hat{G}=P(p,l,q)$ and $P_{p+1}=xv_{1}v_{2}\cdots v_{p-1}y$, $P_{l+1}=xu_{1}u_{2}\cdots u_{l-1}y$, and $P_{q+1}=xw_{1}w_{2}\cdots w_{q-1}y$, $p, q\geq2$, $l=min\{p,l,q\}\geq1$. Specially, if $l=1$ then $P_{l+1}=xy$.

We first prove that $p=q=2$ and $l=1$. Assume, on the contrary, that $l>1$ or $p>2$ or $q>2$.
If $l>1$, then we use Kelmans operation of $G$ on vertices $x$ and $u_{1}$ to obtain $G^{'}$. Note that $G^{'}\in \mathcal{B}^{**}(n)$, $\hat{G^{'}}=P(p,l-1,q)$ and  $\rho(A_{f}(G))<\rho(A_{f}(G^{'}))$ by Lemma \ref{Kel}, a contradiction. Hence $l=1$. Similarly, if $p>2$, then we use Kelmans operation of $G$ on vertices $x$ and $v_{1}$ to obtain $G^{'}$. Note that $G^{'}\in \mathcal{B}^{**}(n)$, $\hat{G^{'}}=P(p-1,1,q)$ and  $\rho(A_{f}(G))<\rho(A_{f}(G^{'}))$ by Lemma \ref{Kel}, a contradiction. Hence $p=2$. By symmetry, we have $q=p=2$ and so $\hat{G^{'}}=P(2,1,2)$.

Secondly, we prove that at most one tree is attached to one vertex of degree $3$ in $P(2,1,2)$. On the contrary, we suppose one tree $T$ is attached to  vertex $v_1$(resp. $w_1$) of degree $2$ in $P(2,1,2)$, then we use Kelmans operation of $G$ on vertices $x$ and $v_{1}$(resp. $w_1$) to obtain $G^{'}$.
Note that $G^{'}\in \mathcal{B}^{**}(n)$, $\hat{G^{'}}=P(2,1,2)$, tree $T$ is not attached to $v_1$(resp. $w_1$) but $x$, and  $\rho(A_{f}(G))<\rho(A_{f}(G^{'}))$ by Lemma \ref{Kel}, a contradiction. Furthermore, suppose a tree has attached to $x$, if there exists another tree $T^{'}$ attaching to another vertex $y$ of degree $3$ in $P(2,1,2)$, then we use Kelmans operation of $G^{'}$
on vertices $x$ and $y$ to obtain $G^{''}$. Note that $G^{''}\in \mathcal{B}^{**}(n)$, tree $T^{'}$ is not attached to $y$ but $x$, and  $\rho(A_{f}(G^{'}))<\rho(A_{f}(G^{''}))$ by Lemma \ref{Kel}, a contradiction. For convenience, we denote by $T(x)$ the only attaching tree attached to  $x$. Now we know $G$ can be obtained from graph $P(2,1,2)$ by attaching  the tree $T(x)$ to the vertex $x$.

Finally, analogously with the proof of Lemma \ref{thB1}, we know that $T(x)\cong S_{n-3}$ and the degree of $x$ in $T(x)$ is $n-4$.

Combining above arguments, we have $G\cong G_1$. This completes the proof.
\end{proof}

\begin{theorem}\label{thB} Let $G$ be a graph in $\mathcal{B}(n)$, where $n\geq 4$. If $f(x,y)$ has the property $P^*$, then $\rho(A_{f}(G))\leq \rho(A_{f}(G_1))$,
and the equality holds if and only if $G\cong G_1$.
\end{theorem}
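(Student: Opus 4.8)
The plan is to compare the two extremal graphs arising from the two structural families. We already know from Lemma \ref{thB1} that $G_2$ dominates all of $\mathcal{B}^{*}(n)$ and from Lemma \ref{thB2} that $G_1$ dominates all of $\mathcal{B}^{**}(n)$, and since $\mathcal{B}(n)=\mathcal{B}^{*}(n)\cup\mathcal{B}^{**}(n)$, it suffices to show
\[
\rho(A_{f}(G_2))<\rho(A_{f}(G_1)).
\]
Both $G_1$ and $G_2$ have a single high-degree vertex carrying all the pendant vertices, together with a small fixed ``core'' ($P(2,1,2)$ versus $B(3,1,3)$). The whole theorem therefore reduces to a single head-to-head inequality between two explicit graphs on the same vertex count $n$.

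The first step I would take is to exhibit $G_2$ as the result of a single Kelmans operation applied to some graph in $\mathcal{B}^{**}(n)$, or more directly, to find a local move that transforms $G_2$ into $G_1$ and strictly increases $\rho(A_f(\cdot))$ via Lemma \ref{Kel} or Lemma \ref{PendentVertex}. Concretely, in $G_2$ the central vertex has degree $4$ coming from the two triangles of $B(3,1,3)$ sharing that vertex, plus $n-5$ pendants; in $G_1$ the central vertex has degree $3$ from the two triangles of $P(2,1,2)$ sharing an edge through it, plus $n-4$ pendants. The natural approach is to identify a vertex $u$ of degree $2$ lying on one of the triangles of $G_2$ and apply a Kelmans operation (or the pendant-shifting move of Lemma \ref{PendentVertex}) that collapses the $B(3,1,3)$ core to a $P(2,1,2)$ core while transferring the freed vertex to the pendant set. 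If such a move lands inside $\mathcal{B}(n)$ and is non-trivial ($G_2\not\cong$ image), then Lemma \ref{Kel} gives the strict inequality immediately.

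If no single clean Kelmans move connects $G_2$ to $G_1$, the fallback is a direct spectral comparison using Lemma \ref{l2.1}: let $\mathbf{x}$ be the Perron vector of $A_f(G_2)$ and construct a test vector $\mathbf{y}$ on $V(G_1)$ (both graphs share the same $n$) by rerouting mass so that $\mathbf{y}^{\top}A_f(G_1)\mathbf{y}\ge \mathbf{x}^{\top}A_f(G_2)\mathbf{x}=\rho(A_f(G_2))$, with strict inequality somewhere, and then invoke $\rho(A_f(G_1))=\max_{\|\mathbf{z}\|=1}\mathbf{z}^{\top}A_f(G_1)\mathbf{z}\ge \mathbf{y}^{\top}A_f(G_1)\mathbf{y}$. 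Here the property $P^*$ enters crucially: shifting an edge from the symmetric configuration near the degree-$4$ vertex of $G_2$ toward the more lopsided degree distribution of $G_1$ does not decrease the relevant weights $f(d_i,d_j)$, because condition $(iii)$ favors larger degree gaps at fixed degree sum and conditions $(i)$--$(ii)$ control the monotonicity as the central degree changes. One must be careful that the degrees $d_i$ appearing in $f$ are recomputed in the target graph, so the test-vector bookkeeping is on the weights $f(d_i(G_1),d_j(G_1))$, not those of $G_2$.

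The main obstacle I anticipate is precisely this weight-bookkeeping across the two distinct cores: the Kelmans and pendant lemmas are stated for a move within a single graph, whereas $G_1$ and $G_2$ differ in the topology of their cores, not merely in how pendants are distributed. So the delicate part is to realize the passage $G_2\rightsquigarrow G_1$ as a legitimate sequence of moves each covered by Lemma \ref{Kel} or \ref{PendentVertex}, verifying at every step that the intermediate graph remains bicyclic (stays in $\mathcal{B}(n)$) and that the degree changes are in the direction that property $P^*$ rewards. Once that chain is established, strictness and the uniqueness clause ``equality iff $G\cong G_1$'' follow at once, since each applied lemma is strict whenever the graphs are non-isomorphic, and combining with Lemmas \ref{thB1} and \ref{thB2} shows no other graph in $\mathcal{B}(n)$ can match $\rho(A_f(G_1))$.
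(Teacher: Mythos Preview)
Your overall strategy matches the paper's exactly: reduce to the single inequality $\rho(A_f(G_2))<\rho(A_f(G_1))$ via Lemmas~\ref{thB1} and~\ref{thB2}, and obtain that inequality by a Kelmans move. What you are missing is simply the identification of the move. In $G_2$ the base $B(3,1,3)$ has four degree-$2$ vertices, two on each triangle; pick one from each triangle, say $a$ and $b$. These are non-adjacent, and the Kelmans operation on $a$ and $b$ deletes the single edge from $a$ to its triangle partner and reroutes it to $b$, turning $a$ into a pendant at the center and turning the core into $K_4$ minus an edge, which is precisely $P(2,1,2)$. The resulting graph is $G_1$ on the nose, and Lemma~\ref{Kel} gives the strict inequality in one step.

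So your worries about ``weight-bookkeeping across the two distinct cores'' and about needing a \emph{sequence} of moves, or a Rayleigh-quotient fallback, are unnecessary: a single application of Lemma~\ref{Kel} suffices, and that lemma already handles all the degree recomputation internally. One small omission: you should treat $n=4$ separately, since then $\mathcal{B}^{*}(4)=\emptyset$ and the conclusion follows directly from Lemma~\ref{thB2}.
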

\begin{proof} When $n= 4$, the theorem holds obviously by Lemma \ref{thB2} since
$\mathcal{B}^{*}(n)=\emptyset$. When $n>4$, we show that $\rho(A_{f}(G_2))< \rho(A_{f}(G_1))$ as follows. We choose two non-adjacent vertices both with degree two in $G_2$ and use Kelmans operation of $G_2$ on the two vertices, then we obtain $G_1$. Hence $\rho(A_{f}(G_2))< \rho(A_{f}(G_1))$ by Lemma \ref{Kel}. Therefore, the theorem holds by Lemma \ref{thB1} and Lemma \ref{thB2}.
\end{proof}

\section{The bicyclic graphs with the second largest $\rho(A_f(G))$ when $f(x,y)=x+y, (x+y)^{2}$ or $x^{2}+y^{2}$}

In this section we would like to determine the bicyclic graphs with second largest $\rho(A_f(G))$, that is

\begin{figure}[htb]
\centering
\vspace*{1cm}\setlength{\belowcaptionskip}{0.1cm}
\includegraphics[width=0.47\textwidth]{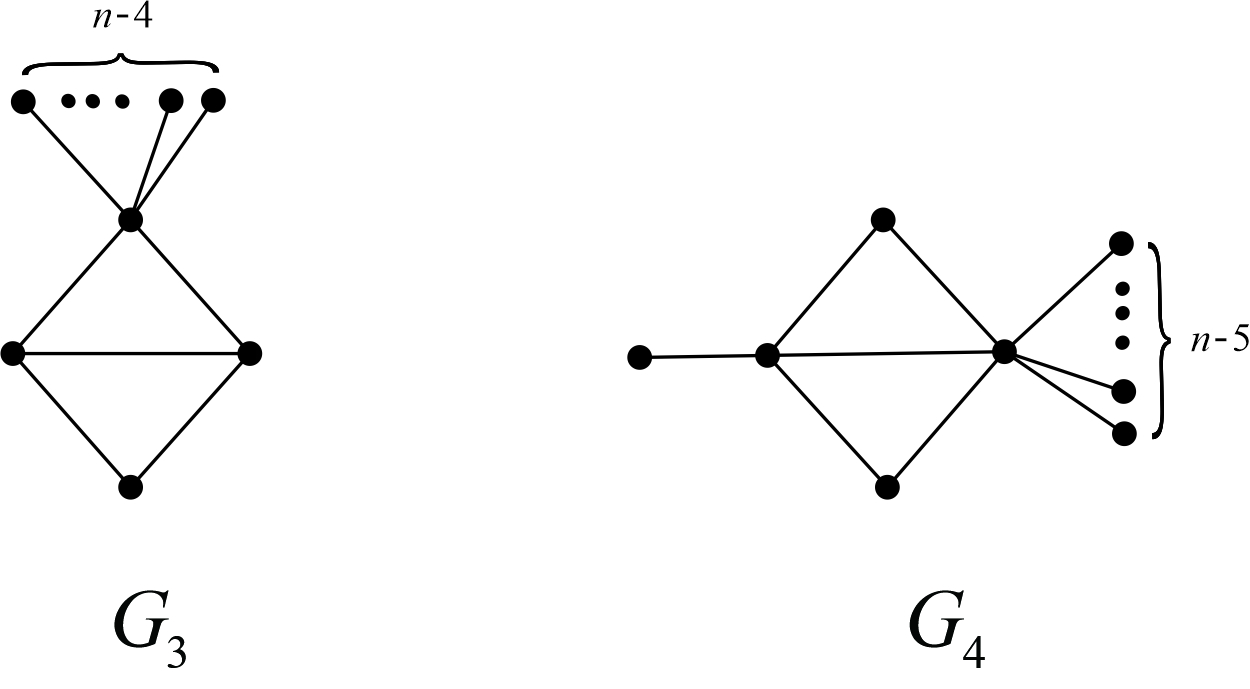}
\caption{The bicyclic graphs $G_3$ and $G_4$.} \label{G3-4}
\end{figure}

\begin{theorem}\label{secondB} Assume that $f(x,y)$ has the property $P^*$. Then the bicyclic graph on $n$ ($n\geq 6$) vertices with the second largest  $\rho(A_{f}(G))$ is $G_2$ or $G_3$ or $G_4$.
\end{theorem}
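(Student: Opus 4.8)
The plan is to exploit the fact, established in Theorem \ref{thB}, that $G_1$ is the \emph{unique} graph attaining the largest $\rho(A_f(G))$ over $\mathcal{B}(n)$. Consequently the graph with the second largest spectral radius is precisely a maximizer of $\rho(A_f(\cdot))$ over the finite set $\mathcal{B}(n)\setminus\{G_1\}=\mathcal{B}^{*}(n)\cup\bigl(\mathcal{B}^{**}(n)\setminus\{G_1\}\bigr)$. I would therefore treat the two subclasses separately and then compare the two local winners. For the $\infty$-based class Lemma \ref{thB1} already gives everything: the unique maximizer over $\mathcal{B}^{*}(n)$ is $G_2$, and since $G_2\not\cong G_1$ it is a genuine candidate for second place. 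It thus remains to identify the maximizer of $\rho(A_f(\cdot))$ over $\mathcal{B}^{**}(n)\setminus\{G_1\}$, and since the overall maximum $G_1$ is itself a $\theta$-graph, the second largest value over all of $\mathcal{B}(n)$ will be the larger of $\rho(A_f(G_2))$ and this restricted $\theta$-maximum.

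For the $\theta$-based class I would let $H$ be a maximizer over $\mathcal{B}^{**}(n)\setminus\{G_1\}$ and re-run the reduction used in the proof of Lemma \ref{thB2}: a sequence of Kelmans operations (Lemma \ref{Kel}) and pendant-relocation operations (Lemma \ref{PendentVertex}), each of which strictly increases $\rho(A_f(\cdot))$ while keeping the graph inside $\mathcal{B}^{**}(n)$. The crucial new observation is a maximality constraint: if some admissible improving operation applied to $H$ produced a $\theta$-graph different from $G_1$, that graph would lie in $\mathcal{B}^{**}(n)\setminus\{G_1\}$ and have strictly larger spectral radius, contradicting the choice of $H$. Hence every improving operation on $H$ must terminate at $G_1$; in other words $H$ is stuck exactly one improving step below $G_1$. (Note that $G_2$ enjoys the analogous property, since by the proof of Theorem \ref{thB} a single Kelmans operation on two non-adjacent degree-two vertices of $G_2$ yields $G_1$.)

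The core of the argument is then an enumeration of all such graphs $H$, which I would carry out by undoing, in every possible way, the final step of the reduction of Lemma \ref{thB2} that produces $G_1$. Starting from the shape of $G_1$ --- the base $P(2,1,2)$ with a star $S_{n-3}$ attached at one degree-three vertex $x$ --- the immediate predecessors arise by minimal modifications such as (i) relocating a single pendant from the dominant vertex $x$ to another vertex of the base $P(2,1,2)$; (ii) replacing the attached star $T(x)\cong S_{n-3}$ by a tree lying one operation away from a star; or (iii) replacing the base $P(2,1,2)$ by the next base $P(2,2,2)$ carrying a star at a degree-three vertex. Using Lemma \ref{Kel} and Lemma \ref{PendentVertex} I would show that every configuration \emph{other} than $G_3$ and $G_4$ admits a further improving operation whose output is a $\theta$-graph distinct from $G_1$, so that it cannot be the maximizer $H$; the two configurations surviving this sieve are exactly $G_3$ and $G_4$. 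This case analysis is the step I expect to be the main obstacle: one must verify that precisely two genuinely distinct runner-up $\theta$-graphs survive, that none of the discarded configurations is itself an overlooked local maximum, and that the hypothesis $n\ge 6$ guarantees $G_1,G_2,G_3,G_4$ are pairwise non-isomorphic while the pendant sets involved are nonempty so that Lemma \ref{PendentVertex} applies. Where two surviving candidates are not directly comparable by a Kelmans move, I would fall back on the Rayleigh characterization of Lemma \ref{l2.1}, estimating ${\bf x}^{\top}A_f(\cdot){\bf x}$ on a common test vector built from a Perron vector.

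Combining the two subclasses, the second largest $\rho(A_f(G))$ over $\mathcal{B}(n)$ is attained by whichever of $G_2,G_3,G_4$ has the largest spectral radius. Because these three pairwise comparisons genuinely depend on the particular function $f$ satisfying property $P^{*}$ --- different admissible $f$ select different winners --- no single graph dominates universally, and the extremal graph is $G_2$, $G_3$, or $G_4$, as claimed.
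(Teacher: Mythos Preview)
Your top-level decomposition matches the paper's: dispose of $\mathcal{B}^{*}(n)$ via Lemma \ref{thB1} (giving $G_2$) and then identify the maximizer over $\mathcal{B}^{**}(n)\setminus\{G_1\}$. But the paper organizes the $\theta$-case very differently. Rather than enumerating one-step predecessors of $G_1$, it splits according to the \emph{number of trees attached to the base} $\hat{G}$: if more than two trees hang from $\hat G$, Kelmans operations on $x$ and a base-neighbor cut this number down, and the intermediate graphs cannot be $G_1$ since $G_1$ carries a single attached tree; with exactly two trees the same reductions force base $P(2,1,2)$ with stars at $x$ and $y$, and Lemma \ref{PendentVertex} then pins the sizes to $S_2$ and $S_{n-4}$, i.e.\ $G_4$; with exactly one tree and base reduced to $P(2,1,2)$ the only non-$G_1$ possibility is a star at a degree-two base vertex, i.e.\ $G_3$. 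The ``number of trees'' invariant is precisely what lets each reduction stay inside $\mathcal{B}^{**}(n)\setminus\{G_1\}$ without any predecessor bookkeeping.

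Your predecessor-enumeration route is sound in principle, but the sketch has a concrete gap: cases (i)--(iii) do not produce $G_3$. In $G_3$ all $n-4$ pendants sit at a degree-two base vertex $v_1$, and the single Kelmans step $G_3\to G_1$ (on $v_1$ and $x$) moves \emph{all} of them to $x$ at once; this is neither a one-pendant relocation (i), nor a near-star at $x$ (ii), nor a base enlargement (iii). Because a Kelmans operation transfers the entire set $N(u)\setminus N[v]$ in one shot, the collection of one-step predecessors of $G_1$ is much larger than the ``minimal modifications'' you list, and exhausting it is exactly the chore the paper's invariant avoids. Finally, your Rayleigh-quotient fallback is unnecessary here: the theorem only asserts that the second-largest value is attained in $\{G_2,G_3,G_4\}$, and the paper makes no attempt to compare these three for general $f$.
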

\begin{proof}
{\bf Case $1$. $G\in \mathcal{B}^{*}(n)$ ($n\geq 6$).}
According to Lemma \ref{thB1}, if $G\in \mathcal{B}^{*}(n)$, $f(x,y)$ has the property $P^*$, then $\rho(A_{f}(G))\leq \rho(A_{f}(G_2))$,
and the equality holds if and only if $G\cong G_2$, which is depicted in Figure \ref{fig2}.\\
{\bf Case $2$. $G\in \mathcal{B}^{**}(n)\setminus\{G_1\}$ ($n\geq 6$).}
 Choose $G\in \mathcal{B}^{**}(n)\setminus\{G_1\}$ such that $\rho(A_{f}(G))$ is as large as possible. Suppose $\hat{G}=P(p,l,q)$ and $P_{p+1}=xv_{1}v_{2}\cdots v_{p-1}y$, $P_{l+1}=xu_{1}u_{2}\cdots u_{l-1}y$, and $P_{q+1}=xw_{1}w_{2}\cdots w_{q-1}y$, $p, q\geq2$, $l=min\{p,l,q\}\geq1$. Specially, if $l=1$ then $P_{l+1}=xy$.\\
{\bf Case $2.1$.} If there exists more than two trees attaching on $\hat{G}=P(p,l,q)$, then obviously we can always find a new graph $G'\in \mathcal{B}^{**}(n)$ such that there exists exactly two trees attaching on $\hat{G'}$ and  $\rho(A_{f}(G))< \rho(A_{f}(G'))$ by repeating using Kelmans operation on $x$ and one of its adjacent vertex in $\hat{G}$, a contradiction.\\
{\bf Case $2.2$.} If there exists exactly two tree attaching on $\hat{G}=P(p,l,q)$, then similarly with the proof in Lemma \ref{thB2}, by repeating using Kelmans operation on $x$ (or $y$) and one of its adjacent vertex in $\hat{G}$, the two attaching trees must be $T(x)$ and $T(y)$ such that $x$ and $y$ are attaching vertices  and $p=q=2$, $l=1$. Moreover, by the maximality of $\rho(A_f(G))$, analogously with the proof in Lemma \ref{thB1}, $T(x)$ and $T(y)$ must be star graphs. Furthermore, the two star graphs must be $S_2$ and $S_{n-4}$ by Lemma \ref{PendentVertex}. So the graph $G\cong G_4$, which is depicted in Figure \ref{G3-4}.\\
{\bf Case $2.3$.} If there exists exactly one tree attaching on $\hat{G}=P(p,l,q)$, then by the maximality of $\rho(A_{f}(G))$, it is easy to see that $\hat{G}=P(2,1,2)$ by repeating using Kelmans operation on $x$ (or $y$) and one of its adjacent vertex in $\hat{G}$, in which the proof is similar with Lemma \ref{thB2}. Furthermore, since $G \not \cong G_1$, then $G\cong G_3$, which is depicted in Figure \ref{G3-4}.

Combining above arguments, we can conclude that the bicyclic graphs on $n$ ($n\geq 6$) vertices with the second largest spectral radius of $A_{f}(G)$ is $G_2$ or $G_3$ or $G_4$.
\end{proof}

By computations, the approximate values of $\rho(A_f(G_i))$ ($i=2,3,4$) when $f(x,y)=1, (x+y), (x+y)^2, (x+y)^3$ for $n=6$ and $n=7$ are shown in two tables in the Appendix. From the two tables, we discover that the bicyclic graphs with the second largest  $\rho(A_f(G))$ depend on the function $f(x,y)$ and the order $n$  when $n$ is small, i.e. the extremal bicyclic graph with the second largest $\rho(A_f(G))$ is not unique for general function $f(x,y)$  with the property $P^*$ and the order $n$. However, we also find that for some well-konwn indices, such as  first Zagreb index $f(x,y)=x+y$, first hyper-Zagreb index $f(x,y)=(x+y)^{2}$ and forgotten index $f(x,y)=x^2+y^2$, the bicyclic graphs with the second largest  $\rho(A_f(G))$ not depend on the order $n$ when $n$ is a little bit larger. So next we would like to show that the bicyclic graphs with the second largest  $\rho(A_f(G))$ for the above three indices are all the graph $G_2$ when $n$ is a little bit large, that is

\begin{theorem} \label{Zagreb} The bicyclic graphs on $n$ vertices with the second largest $\rho(A_f(G))$ are all the graph $G_2$, if one of the following conditions holds.\\
$(i)$ The function is first Zagreb index $f(x,y)=x+y$ and $n\geq 10$.\\
$(ii)$ The  function is first hyper-Zagreb index $f(x,y)=(x+y)^{2}$ and $n\geq 9$.\\
$(iii)$ The  function is forgotten index $f(x,y)=x^2+y^2$ and $n\geq 8$.
\end{theorem}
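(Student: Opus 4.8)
The plan is to invoke Theorem \ref{secondB}, which already confines the second-largest graph to the three candidates $G_2$, $G_3$ and $G_4$; it therefore suffices to prove $\rho(A_f(G_2))>\rho(A_f(G_3))$ and $\rho(A_f(G_2))>\rho(A_f(G_4))$ for each of the three functions and every admissible $n$. The decisive structural fact I would exploit is that $G_2$ is the only one of the three possessing a vertex of the largest possible degree $n-1$, while both $G_3$ and $G_4$ have maximum degree $n-2$: the degree-$(n-1)$ $\theta$-configuration is used up by the global maximizer $G_1$, so every theta-based runner-up necessarily loses one unit of degree at its dominant vertex. Since $f$ is increasing, this one-unit gap makes the pendant contribution at the dominant vertex strictly heavier for $G_2$, and that is what ultimately wins.

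First I would compute, for each of $G_2,G_3,G_4$, the characteristic equation of its quotient matrix under the natural equitable partition (the dominant vertex; the symmetric cycle-or-path vertices; and the pendant classes). For $G_2$ this collapses to the secular equation
\begin{equation*}
\lambda=\frac{(n-5)\,f(n-1,1)^2}{\lambda}+\frac{4\,f(n-1,2)^2}{\lambda-f(2,2)},
\end{equation*}
whose largest root is $\rho(A_f(G_2))$; for $G_3$ and $G_4$ the analogous equations have the same shape, but with $n-1$ replaced by $n-2$ in the dominant-degree arguments and with a handful of extra lower-order terms coming from the second degree-$3$ (respectively degree-$4$) vertex of the base $P(2,1,2)$. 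Each such secular function is strictly increasing once $\lambda$ exceeds all of its poles, so $\rho(A_f(G_2))>\rho(A_f(G_j))$ is equivalent to the statement that $G_j$'s secular function is positive at $\lambda=\rho(A_f(G_2))$; equivalently, I would evaluate the difference of the two secular functions there, using $G_2$'s own equation to eliminate the leading unknown.

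The heart of the matter is then a one-variable estimate. From $G_2$'s secular equation I obtain the clean lower bound $\rho(A_f(G_2))^2>(n-5)f(n-1,1)^2+4f(n-1,2)^2$, and for $G_3,G_4$ I would produce matching upper bounds of the form $\rho(A_f(G_j))^2\le (n-4)f(n-2,1)^2+(\text{bounded lower-order terms})$, obtained either from the row sums of $A_f^2$ or directly from their secular equations. Substituting $f(x,y)=x+y$, $(x+y)^2$, $x^2+y^2$ in turn, the gap between the two sides is driven by $(n-5)\bigl(f(n-1,1)^2-f(n-2,1)^2\bigr)$ together with the bonus term $4f(n-1,2)^2$; comparing these against the accumulated lower-order contributions yields a quadratic-in-$n$ inequality whose solution pins down the admissible range of $n$. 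The three distinct growth rates of the three functions are precisely what generate the three thresholds $n\ge 10,9,8$.

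The main obstacle I anticipate is sharpness rather than sign. The degree advantage of $G_2$ enters $\rho^2$ only at order $(n-5)\,\Delta f$ with $\Delta f=f(n-1,1)-f(n-2,1)$, whereas the competing lower-order structural terms of $G_4$ (its extra degree-$4$ vertex and pendant) and of $G_3$ (its one additional pendant at the degree-$(n-2)$ vertex) are of the same order of magnitude, so coarse estimates such as the bare row-sum-of-$A_f^2$ bound overshoot the true thresholds by a few units of $n$. To reach $n\ge 10,9,8$ exactly I would need to retain the $O(1)$ corrections in both the lower bound for $\rho(A_f(G_2))$ and the upper bounds for $\rho(A_f(G_3))$ and $\rho(A_f(G_4))$, and for the few borderline small orders fall back on the direct numerical evaluation already tabulated in the Appendix.
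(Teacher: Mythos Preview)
Your overall framework matches the paper's: reduce via Theorem~\ref{secondB} to the three candidates $G_2,G_3,G_4$, pass to quotient matrices of the natural equitable partitions, and then separate $\rho(A_f(G_2))$ from $\rho(A_f(G_3))$ and $\rho(A_f(G_4))$ by polynomial inequalities in $n$. The difference is in execution. Rather than juggling a two-sided comparison (lower bound for $G_2$ from its secular equation versus upper bounds for $G_3,G_4$ from row sums of $A_f^2$), the paper picks a single explicit separator $\sqrt{n-1}\,f(n-1,1)$ and simply evaluates each characteristic polynomial there: one shows $\phi_1\bigl(\sqrt{n-1}\,f(n-1,1)\bigr)<0$ once, for any $f$ with property~$P^*$, giving $\rho(A_f(G_2))>\sqrt{n-1}\,f(n-1,1)$; and then, for each of the three specific $f$'s, one checks $\phi_2,\phi_3$ are positive at that same point (Descartes' rule pins down the sign pattern of the roots so that this really forces $\lambda_2,\lambda_3$ below the separator). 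This sidesteps the delicate balancing of $O(1)$ corrections you anticipate, and the thresholds $n\ge 10,9,8$ drop out directly from the sign of the resulting univariate polynomials in $n$.

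One genuine gap in your plan: your proposed safety net for the borderline orders---``fall back on the direct numerical evaluation already tabulated in the Appendix''---does not actually cover the cases you need. The Appendix tables treat only $n=6$ and $n=7$, whereas the borderline orders for the three indices are $n=10,9,8$, none of which appear there. So if your coarse row-sum bound overshoots by even two or three units (as you yourself expect), you would still have to produce fresh numerical checks or sharpen the estimates, rather than cite existing tables.
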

\begin{proof}
To prove  Theorem \ref{Zagreb}, firstly we shall introduce some important results about matrix from \cite{BroSpe}.

\begin{lemma}\label{symmetric}
 Let $M$ be a real symmetric matrix. Then the eigenvalues of $M$ are all real numbers and their sum is euqal to the sum of the diagonal entries of $M$.
\end{lemma}

\begin{lemma}\label{PFtheorem} An irreducible, nonnegative $m\times m$ matrix $M$ always has a real, positive eigenvalue $\rho_1$, so that:\\
$(i)$ $|\rho_i|\leq \rho_1$ holds for all other (possibly complex) eigenvalues
$\rho_i$, $i=2,\ldots,m$,\\
$(ii)$ $\rho_1$ is a simple zero of the characteristic polynomial of $M$, and\\
$(iii)$ the eigenvector ${\bf x_1}$ corresponding to $\rho_1$ has positive components.
\end{lemma}

Suppose that $M$ is a real symmetric matrix whose rows and columns are indexed by
$R=\{1,2,\ldots,m\}$. Let ${R_1,R_2,\ldots,R_n}$ be a partition of $R$. Let $M$ be partitioned according to ${R_1,R_2,\ldots,R_n}$, that is,
\begin{equation*}
 M=   \begin{bmatrix}
    M_{1,1} & \ldots & M_{1,n}\\
    \vdots & \ddots & \vdots\\
    M_{n,1} & \ldots & M_{n,n}
    \end{bmatrix}
\end{equation*}
where $M_{i,j}$ denotes the block of $M$ formed by rows in $R_i$ and the columns in $R_j$. Let $b_{i,j}$ denote the average row sum of $M_{i,j}$. Then the matrix $N=[b_{i,j}]$ is called the {\em quotient matrix}. If the row sum of each block $M_{i,j}$ is constant, then the partition is called {\em equitable}.

\begin{lemma}\label{equitable}
Let $M\geq 0$ be an irreducible real symmetric matrix, $N$ be the quotient matrix of an equitable partition of $M$. If $\lambda$ is an eigenvalue of $N$, then $\lambda$ is also an eigenvalue of  $M$. Furthermore, $\rho(N)=\rho(M)$.
 \end{lemma}
What's more, we introduce the {\bf Descartes' Rule of Signs} in \cite{DescartesRule}.
\begin{lemma} \cite{DescartesRule} \label{DescartesRule}
Let $p(x)=a_0x^{b_0}+a_1x^{b_1}+\cdots+a_nx^{b_n}$ denote a polynomial with nonzero real coefficients $a_i$, where the $b_i$ are integers satisfying $0\leq b_0 < b_1 <\cdots< b_n$. Then the number of positive real zeros of $p(x)$ is either equal to the number of variations in sign in the sequence $a_0,\ldots,a_n$ of the coefficients or less than that by an even number. The number of negative real zeros of $p(x)$ is either equal to the number of variations in sign in the sequence of the coefficients of $p(-x)$ or less than that by an even number.
\end{lemma}

By the symmetry of the graph $G_2$ on $n\geq 6$ vertices, let $R_1=\{v|d_v=n-1\}$, $R_2=\{v|d_v=2\}$ and $R_3=\{v|d_v=1\}$. Then the quotient matrix of the equitable partition $\{R_1, R_2, R_3\}$ is
\begin{equation*}
 A_1=   \begin{bmatrix}
    0 & 4f(n-1,2) & (n-5)f(n-1,1)\\
    f(n-1,2) & f(2,2)& 0\\
    f(n-1,1) & 0& 0
    \end{bmatrix}.
\end{equation*}

The characteristic polynomial of $A_1$ is
$$\phi_1(\lambda)=
\lambda^{3}-f(2,2)\lambda^{2}-[4f^{2}(n-1,2)+(n-5)f^{2}(n-1,1)]\lambda+(n-5)f^{2}(n-1,1) f(2,2),$$
where $n\geq 6$.

Similarly, when $n\geq 6$, we can obtain one quotient matrix of the equitable partition of $A_f(G_4)$ is
\begin{equation*}
 A_2=   \begin{bmatrix}
    0 & 2f(n-2,2) & f(n-2,4) & 0 & (n-5)f(n-2,1)\\
    f(n-2,2) & 0 & f(4,2) & 0 & 0\\
    f(n-2,4) & 2f(4,2) & 0 & f(4,1) & 0\\
    0 & 0 & f(4,1) & 0 & 0\\
    f(n-2,1) & 0 & 0 & 0 & 0
    \end{bmatrix}.
\end{equation*}
The characteristic polynomial of $A_2$ is
\begin{eqnarray}
\phi^{'}_2(\lambda)&=&\lambda^{5}-[2x_1^{2}+x_2^{2}+(n-5)x_3^{2}+2x_4^{2}+x_5^{2}]\lambda^{3} -4x_1x_2x_4\lambda^{2}\nonumber\\
&&+[2x_1^{2}x_5^{2}+(2n-10)x_3^{2}x_4^{2}+(n-5)x_3^{2}x_5^{2}]\lambda,\nonumber
\end{eqnarray}
where $n\geq 6$, $x_1=f(n-2,2)$, $x_2=f(n-2,4)$, $x_3=f(n-2,1)$, $x_4=f(4,2)$ and  $x_5=f(4,1)$.
For convenience, let
\begin{eqnarray}
\phi_2(\lambda)&=&\lambda^{4}-[2x_1^{2}+x_2^{2}+(n-5)x_3^{2}+2x_4^{2}+x_5^{2}]\lambda^{2} -4x_1x_2x_4\lambda\nonumber\\
&&+2x_1^{2}x_5^{2}+(2n-10)x_3^{2}x_4^{2}+(n-5)x_3^{2}x_5^{2}.\nonumber
\end{eqnarray}
Obviously, the maximum zeros of $\phi^{'}_2(\lambda)$ and $\phi_2(\lambda)$ are the same.

Moreover, when $n\geq 5$, we can also obtain one quotient matrix of the equitable partition of $A_f(G_3)$ is
\begin{equation*}
 A_3=   \begin{bmatrix}
    0 & 2f(n-2,3) & 0 & (n-4)f(n-2,1)\\
    f(n-2,3) & f(3,3) & f(3,2) & 0\\
    0 & 2f(3,2) & 0& 0\\
    f(n-2,1) & 0 & 0 & 0
    \end{bmatrix}.
\end{equation*}
The characteristic polynomial of $A_3$ is
\begin{eqnarray}
\phi_3(\lambda)&=&\lambda^{4}-y_3\lambda^{3}-[(n-4)y_2^{2}+2y_1^{2}+2y_4^{2}]\lambda^{2} +(n-4)y_2^{2}y_3\lambda\nonumber\\
&&+(2n-8)y_2^{2}y_4^{2},\nonumber
\end{eqnarray}
where  $n\geq 5$, $y_1=f(n-2,3)$, $y_2=f(n-2,1)$, $y_3=f(3,3)$ and $y_4=f(3,2)$.

For convenience, we suppose the maximum zeros of $\phi_1$, $\phi_2$ and $\phi_3$ are $\lambda_1$, $\lambda_2$ and $\lambda_3$, respectively.

\begin{lemma}\label{phi1}
If $f(x,y)$ has the property $P^*$, then $\rho(A_f(G_2))>\sqrt{n-1}f(n-1,1)$ when $n\geq 6$.
\end{lemma}
\begin{proof}
 By Lemma \ref{symmetric} and \ref{equitable}, we denote by $\lambda_1=r_1\geq r_2\geq r_3$ the three real zeros of $\phi_1(\lambda)$. From $\lambda_1=\rho(A_f(G_2))>0$, and  the coefficients of $\lambda$ and the constant term are positive when $n\geq 6$, then by Lemma \ref{DescartesRule}, it is easy to know  $r_1\geq r_2>0>r_3$. And notice that
 \begin{eqnarray}
\phi_1(\sqrt{n-1}f(n-1,1))&=&(n-1)\sqrt{n-1}f^3(n-1,1)-(n-1)f(2,2)f^2(n-1,1)\nonumber\\
&&-[4f^2(n-1,2)+(n-5)f^2(n-1,1)]\sqrt{n-1}f(n-1,1)\nonumber\\
&&+(n-5)f^2(n-1,1)f(2,2)\nonumber\\
&&\leq -4f(2,2)f^2(n-1,1)\nonumber\\
&&<0.\nonumber
\end{eqnarray}
 And since $\lim\limits_{\lambda\rightarrow +\infty}\phi_1(\lambda)=+\infty$, then we have $\lambda_1> \sqrt{n-1}f(n-1,1)$.
\end{proof}

%
\noindent{\bf The proof of Theorem \ref{Zagreb}.} 
 By Lemma \ref{symmetric} and Lemma \ref{equitable}, we know all zeros of $\phi_1$, $\phi_2$ and $\phi_3$ are real. For convenience, we denote by $\lambda_2=a_1\geq a_2\geq a_3\geq a_4$ the four zeros of $\phi_2(\lambda)$ and $\lambda_3=b_1\geq b_2\geq b_3\geq b_4$ the four zeros of $\phi_3(\lambda)$.  By Lemma \ref{PFtheorem} and Lemma \ref{equitable}, $\lambda_2=\rho(A_f(G_4))> 0$ and $\lambda_3=\rho(A_f(G_3))> 0$. Furthermore, the coefficients of $\lambda^4$ and the constant term  are positive while the coefficients of $\lambda^2$ and $\lambda$ are negative in $\phi_2$ when $n\geq 6$. Then by Lemma \ref{DescartesRule}, it is easy to know  $a_1\geq a_2>0>a_3\geq a_4$. Similarly, it is easy to know  $b_1\geq b_2>0>b_3\geq b_4$ when $n\geq 5$ by Lemma \ref{DescartesRule}.

For $f(x,y)=x+y$, by Lemma \ref{phi1}, the maximum zeros $\lambda_1$ of $\phi_1$ satifies: $\lambda_1> \sqrt{n-1}f(n-1,1)=n\sqrt{n-1}$ when $n \geq 6$.
$$\phi_2(n\sqrt{n-1})=1067n - 48n^2\sqrt{n-1} - 24n^3\sqrt{n-1} - 533n^2 + 16n^3 - 18n^4 + 3n^5 - 485 >0$$
when $n \geq 10$, and
 \begin{eqnarray}
\phi_2((n-5)\sqrt{n-1})&=&240n\sqrt{n-1}+72n^2\sqrt{n-1} - 24n^3\sqrt{n-1}- 3668n\nonumber\\
&& + 1577n^2 - 489n^3 + 97n^4 - 7n^5 +  2540\nonumber\\
&&<0\nonumber
\end{eqnarray}
when $n \geq 7$. Notice that $\phi_2(0)>0$. It follows that $n\sqrt{n-1}> \lambda_2> (n-5)\sqrt{n-1}$ when $n \geq 10$. Besides,
$$\phi_3(n\sqrt{n-1})=(n - 1)(2n^2 - 30n^2\sqrt{n-1} - 250n - 13n^3 + 3n^4 + 24n\sqrt{n-1} + 200) >0$$
when $n \geq 9$, and
 \begin{eqnarray}
\phi_3((n-5)\sqrt{n-1})&=&50(n - 4)(n - 1)^2 + (n - 1)^2(n - 5)^4 - 6(n - 1)^{3/2}(n - 5)^3\nonumber\\
&&- (n - 1)(n - 5)^2[2(n + 1)^2\nonumber+ (n - 1)^2(n - 4) + 50]\\
&&+ 6(n - 1)^{5/2}(n - 4)(n - 5) \nonumber\\
&&<0\nonumber
\end{eqnarray}
when $n \geq 8$. Notice that $\phi_3(0)>0$. It follows that $n\sqrt{n-1}> \lambda_3> (n-5)\sqrt{n-1}$ when $n \geq 9$.

Combining above arguments, $\lambda_1>n\sqrt{n-1}> max\{\lambda_2,\lambda_3\}$ for $f(x,y)=x+y$ when $n \geq 10$.

Similarly, for $f(x,y)=(x+y)^2$, we can obtain $\lambda_1>n^2\sqrt{n-1}$ when $n \geq 6$, $n^2\sqrt{n-1}>\lambda_2>(n-5)^2\sqrt{n-1}$ when $n \geq 9$,
and $n^2\sqrt{n-1}>\lambda_3>(n-5)^2\sqrt{n-1}$ when $n \geq 9$. Hence, $\lambda_1>n^2\sqrt{n-1}> max\{\lambda_2,\lambda_3\}$ for $f(x,y)=(x+y)^2$ when $n \geq 9$.

Analougously, for $f(x,y)=x^2+y^2$, we can obtain $\lambda_1>((n-1)^2+1)\sqrt{n-1}>(n-1)^2\sqrt{n-1}$ when $n \geq 6$, $(n-1)^2\sqrt{n-1}>\lambda_2>(n-5)^2\sqrt{n-1}$ when $n \geq 8$,
and $(n-1)^2\sqrt{n-1}>\lambda_3>(n-5)^2\sqrt{n-1}$ when $n \geq 8$. Hence, $\lambda_1>(n-1)^2\sqrt{n-1}> max\{\lambda_2,\lambda_3\}$ for $f(x,y)=x^2+y^2$  when $n \geq 8$.

Therefore, according to Theorem \ref{secondB}, the theorem holds.
\end{proof}

\section{The bicyclic graphs with the first two largest  $\rho(A_{f}(G))$ when $f(x,y)=\frac{1}{2}(x/y+y/x)$}
However, some weighted functions in  weighted adjacency matrices don't have the property $P^*$, so the method of using Kelmans operation in the previous two sections seems to fail.  For example, the  extended adjacency matrix $A_{ex}(G)=[a_{ij}]$ , introduced by Yang et al \cite{YangExt}, is defined as $a_{ij}=\frac{1}{2}\big(d_i/d_j + d_j/d_i\big)$ if $ij\in E$ and $a_{ij}=0$ otherwise.  Its weighted function is the extended index $f(x,y)=\frac{1}{2}(x/y+y/x)$ \cite{LiWang2021}, which has no property $P^*$.   Denote the spectral radius of $A_{ex}(G)$ by $\rho_{ex}(G)$ and recall that we denote the spectral radius of $A(G)$ by $\rho(G)$.
In this section, we will characterize the bicyclic graphs with the first two largest $\rho_{ex}(G)$ (i.e. Theorem \ref{th4.1}), in which the proof technique is using the Ruler's theorem of spectral radius.

\begin{theorem}\label{th4.1}
Among all the bicyclic graphs of order $n\geq12$,
 $G_1$ and $G_2$ are the bicyclic graphs with the first two largest $\rho_{ex}(G)$, respectively. Furthermore, $$\rho_{ex}(G_1)>\frac{1}{2}(n-0.9)\sqrt{n-3.8}>\rho_{ex}(G_2)>\frac{1}{2}(n-0.9)\sqrt{n-5},$$
 where $G_1$ and $G_2$ are depicted in Figure \ref{fig2}.
\end{theorem}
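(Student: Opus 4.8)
The plan is to split on the maximum degree $\Delta$ of $G$. The key structural reduction is that $\Delta(G)=n-1$ forces $G\in\{G_1,G_2\}$: if a vertex $v$ is adjacent to all others it already uses $n-1$ edges, so exactly $m-(n-1)=2$ edges of the bicyclic graph lie among the neighbours of $v$. Up to isomorphism these two edges either form an independent pair, producing two triangles through $v$ and hence $G_2$, or share an endpoint, producing two triangles sharing an edge and hence $G_1$. It therefore remains to (i) locate $\rho_{ex}(G_1)$ and $\rho_{ex}(G_2)$ inside the claimed intervals, and (ii) prove $\rho_{ex}(G)<\tfrac12(n-0.9)\sqrt{n-5}$ for every bicyclic $G$ with $\Delta\le n-2$; together with (i) this yields $\rho_{ex}(G_1)>\rho_{ex}(G_2)>\rho_{ex}(G)$ for all remaining $G$.

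For (i) I would reuse the quotient-matrix machinery of Section 3. The graph $G_1$ admits the equitable partition into its centre $x$, the other degree-$3$ vertex $y$, the two degree-$2$ vertices $v_1,w_1$, and the $n-4$ pendants, whose $4\times4$ quotient matrix has the extended weights $\tfrac12(s/t+t/s)$ as entries; by Lemma \ref{equitable} its largest eigenvalue equals $\rho_{ex}(G_1)$. Similarly $G_2$ gives a $3\times3$ quotient matrix (the analogue of $A_1$ in the proof of Theorem \ref{Zagreb}) with largest eigenvalue $\rho_{ex}(G_2)$. Exactly as in Lemma \ref{phi1}, Descartes' rule (Lemma \ref{DescartesRule}) shows each quotient matrix has a unique largest real root, so it suffices to check signs of the two characteristic polynomials at the thresholds: the $G_1$-polynomial should be negative at $\tfrac12(n-0.9)\sqrt{n-3.8}$, and the $G_2$-polynomial positive at $\tfrac12(n-0.9)\sqrt{n-3.8}$ but negative at $\tfrac12(n-0.9)\sqrt{n-5}$. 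These are polynomial-in-$n$ inequalities valid for $n\ge12$, and the constants $0.9,3.8,5$ are chosen precisely so that all three hold simultaneously; Lemma \ref{l2.1} is the variational reason the largest root controls $\rho_{ex}$.

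Step (ii) is the heart of the proof and the main obstacle, because the extended index fails property $P^*$, so the Kelmans monotonicity of Lemma \ref{Kel} is unavailable. Here I would replace the reduction machinery by a size estimate: writing $R_i=\sum_{j\sim i}a_{ij}$ for the weighted degree and using that $\rho_{ex}(G)^2=\rho\big(A_{ex}(G)^2\big)$ is at most the largest row sum of $A_{ex}(G)^2$, one gets $\rho_{ex}(G)^2\le\max_i\sum_{k\sim i}a_{ik}R_k$. The target is to show the right-hand side stays below $\tfrac14(n-0.9)^2(n-5)$ whenever $\Delta\le n-2$. Since $G$ is bicyclic, $\sum_i d_i=2n+2$, so only a bounded amount of degree can sit away from the dominant vertex; the row sum is largest at a maximum-degree vertex with as many pendant neighbours as the two surplus edges allow, each pendant contributing $\tfrac14\Delta^2(1+o(1))$. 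A short count shows at most $n-4$ such pendants are possible when $\Delta=n-2$, giving a leading term $\tfrac14(n-2)^2(n-4)$, and a direct polynomial comparison gives $\tfrac14(n-2)^2(n-4)<\tfrac14(n-0.9)^2(n-5)$ for $n\ge12$, which is the needed gap.

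What makes (ii) delicate is uniformity. I must bound the weighted degrees $R_k$ of the neighbours of the top-degree vertex, which may themselves be moderately large (for instance the endpoints of the two surplus edges), and also control the contribution of the few edges not incident to that vertex, all under the single constraint $m=n+1$. I anticipate a brief case analysis on how the two surplus edges sit relative to the dominant vertex (both incident, exactly one incident, or neither), bounding each $R_k$ by an explicit $\tfrac12(\Delta+\cdots)$ expression and summing, with Lemma \ref{PendentVertex} used to fix the pendant placement in borderline cases; one checks that the resulting corrections are of lower order than the $\tfrac14\cdot1.2\,n^2$ margin above. Once the row-sum bound is certified below $\tfrac14(n-0.9)^2(n-5)$ in every case, combining with the intervals from step (i) completes the proof.
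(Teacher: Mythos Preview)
Your split on $\Delta$ and step (i) match the paper (its Lemma~4.5 computes the characteristic polynomials of $G_1,G_2$ and checks their signs at the two thresholds). The gap is in step (ii): the row-sum bound on $A_{ex}(G)^2$ is genuinely too weak for $\Delta=n-2$ at the small values of $n$ the theorem must cover. Take $G_3$ (base $P(2,1,2)\cong K_4-e$, with $n-4$ pendants attached to one of its degree-$3$ vertices, making that vertex have degree $n-2$; this is one of the nine $\Delta=n-2$ graphs). At $n=12$ the largest row sum of $A_{ex}(G_3)^2$ is attained at a pendant and equals $\alpha R_v$ with $\alpha=\tfrac12\bigl(10+\tfrac1{10}\bigr)=5.05$ and $R_v=8\alpha+2\cdot\tfrac12\bigl(\tfrac{10}{3}+\tfrac{3}{10}\bigr)\approx 44.03$, so $\alpha R_v\approx 222.4$; but $\tfrac14(n-0.9)^2(n-5)=\tfrac14(11.1)^2\cdot 7\approx 215.6$. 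Your leading-order margin $0.3n^2$ is right, yet the correction from the two degree-$3$ neighbours contributes $2\alpha\beta\sim(n-2)^2/6$, and together with the linear terms this overturns the inequality for $12\le n\le 16$. Since the row sum itself already exceeds the target, no sharper control of the individual $R_k$ can rescue the argument. (Separately, Lemma~\ref{PendentVertex} needs property $P^*$ and is not available for the extended index.)

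The paper proceeds quite differently for $\Delta\le n-2$. For $\Delta=n-2$ it enumerates all nine bicyclic graphs $D_{n,1},\dots,D_{n,9}$ and, for each, evaluates the characteristic polynomial of $A_{ex}$ at $\tfrac12(n-0.9)\sqrt{n-5}$ to certify the strict inequality (Lemma~4.6). For $\Delta\le n-3$ it does not go through the threshold $\tfrac12(n-0.9)\sqrt{n-5}$ at all: it uses the general estimate $\rho_{ex}(G)\le\tfrac12\bigl(\Delta+\tfrac1\Delta\bigr)\rho(G)$ (Lemma~4.1), then bounds the \emph{ordinary} spectral radius via the Ruler theorem $\rho(G)\le\rho(H(n,n-3,2))$ (Lemmas~4.2--4.3) together with explicit bounds on $\rho(H(n,n-3,2))$ (Lemma~4.4). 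Even this is tight: for $12\le n\le 20$ the resulting upper bound exceeds $\tfrac12(n-0.9)\sqrt{n-5}$, and the paper resorts to a numerical table comparing it directly with $\rho_{ex}(G_2)$.
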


\begin{proof}
To prove Theorem \ref{th4.1}, firstly we shall introduce some notations, terminologies and useful lemmas as follows.

Let $\mathcal{S}(n, \Delta, c)$ be the class of $c$-cyclic graphs on
$n$ vertices with fixed maximum degree $\Delta$.
Let $H(n, \Delta, c)$, as shown in Figure \ref{fig3-4}, be the $c$-cyclic graph with $n$ vertices and maximum degree $\Delta$, where $c \geq 1$ and $\Delta \geq \frac{1}{2}(n+c+1) $. Let $\phi_{ex}(G;x)$ 
be the characteristic polynomial of the extended adjacency matrix of a graph $G$.


\begin{figure}[ht]
\centering
\vspace*{1cm}\setlength{\belowcaptionskip}{0.1cm}
\includegraphics[width=0.8\textwidth]{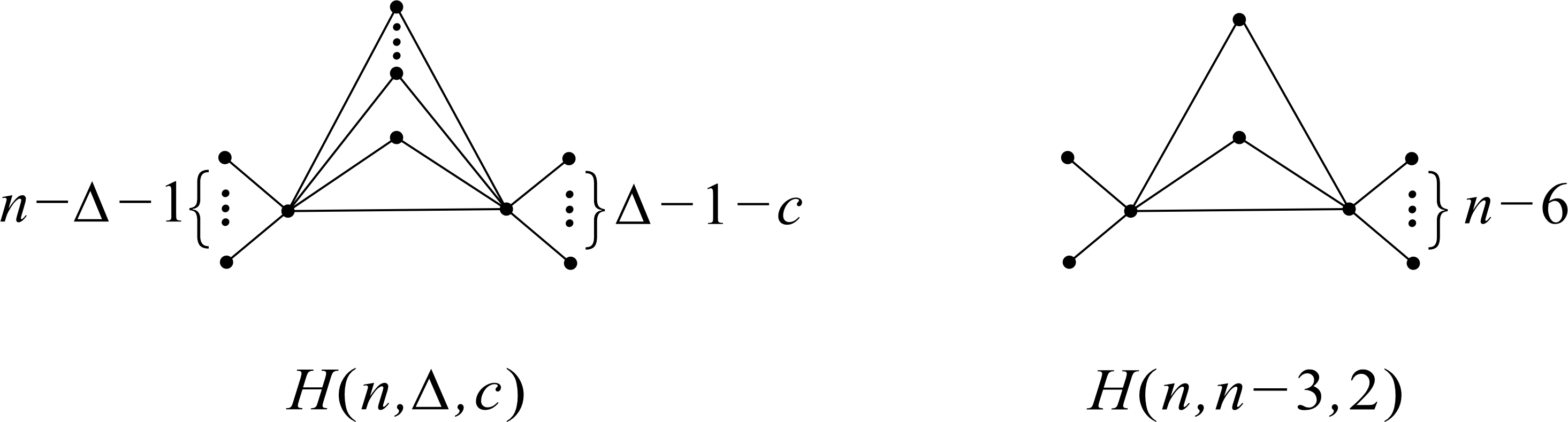}
\caption{The $c$-cyclic graphs $H(n,\Delta,c)$ and $H(n,n-3,2)$.} \label{fig3-4}
\end{figure}

\begin{lemma}\cite{Ext2}\label{l4.1}
For any graph $G$ (with $\delta(G)\geq 1$), we have
\begin{eqnarray*}
\rho(G)\leq \rho_{ex}(G)\leq\frac{1}{2}\bigg(\frac{\Delta(G)}{\delta(G)}+\frac{\delta(G)}{\Delta(G)}\bigg)\rho(G).
\end{eqnarray*}
The left equality holds if and only if $G$ is a regular graph and the right equality holds if and only if $G$ is a regular graph or a  semiregular bipartite graph.
\footnote{The original statement on the right equality is that it holds if and only if $G$ is a complete bipartite graph, but it's not quite right.}
\end{lemma}

Next we would like to introduce the {\bf Ruler's theorem} of spectral radius as follows.
\begin{lemma}\cite{Liuc-cyclic}\label{l4.2}
Let $G$ be the graph with the largest spectral radius in $\mathcal{S}(n,\Delta,c)$, where $c\geq1$ and $\Delta \geq \frac{1}{2}(n+c+1)$. If $n\geq6$ and $c=2$ (i.e. $G$ is a bicyclic graph), then $G\cong H(n,\Delta,2)$.
\end{lemma}

\begin{lemma}\cite{Liuc-cyclic}\label{l4.3}
Let $G$ be the graph with the largest spectral radius in $\mathcal{S}(n,\Delta,c)$. If $\Delta\leq n-2$, then there must exist some graph $G^{'}\in \mathcal{S}(n,\Delta+1,c)$ such that $\rho(G)<\rho(G^{'})$.
\end{lemma}

\begin{lemma}\label{l4.4}
Let $H(n, n-3, 2)$ be a bicyclic graph shown as Figure \ref{fig3-4}.\\
\rm{(i)} $\rho(H(n, n-3, 2))<\sqrt{n}$ for $n\geq12$.\\
\rm{(ii)} $\rho(H(n, n-3, 2))<\sqrt{n-1.2}$ for $n\geq20$.
\end{lemma}
\begin{proof}
By elementary computation, we get $\phi_{ex}(H(n, n-3, 2);x)=x^{n-4}h_n(x)$, where
\begin{eqnarray*}
h_n(x)=x^4-(n+1)x^2-4x+4(n-5).
\end{eqnarray*}
Denote by $x_1\geq x_2\geq x_3\geq x_4$ the roots of $h_n(x)=0$.
Note that $x_4<0, x_1>0$ and $h_n(0)=4(n-5)>0$ for $n\geq12$. Thus $x_3<0<x_2$.
It is easy to check that
\begin{eqnarray*}
h_n(\sqrt{n-3})=4n + (n - 3)^2 - 4\sqrt{n-3} - (n + 1)(n - 3) - 20<0,
\end{eqnarray*}
\begin{eqnarray*}
h_n(\sqrt{n})=4n - n(n + 1) + n^2 - 4\sqrt{n} - 20>0
\end{eqnarray*}
for $n\geq12$,
\begin{eqnarray*}
h_n(\sqrt{n-1.2})=4n + (n - 1.2)^2 - 4\sqrt{n - 1.2} - (n + 1)(n - 1.2) - 20>0
\end{eqnarray*}
for $n\geq20$. Then we obtain the desired result.
\end{proof}


In the following, we partition all the bicyclic graphs of order $n$ into three parts, based on the maximum degree $\Delta$:\\
$(i) \Delta=n-1$ (Lemma \ref{l4.5});\\
$(ii) \Delta=n-2$ (Lemma \ref{l4.6});\\
$(iii) \Delta\leq n-3$ (Lemma \ref{l4.7}).

Firstly, let us consider the bicyclic graphs of order $n$ whose maximum degree is $n-1$. It is easy to see that there are two possibilities for such graphs: $G_1$ or $G_2$, which are depicted in Figure \ref{fig2}. The estimates about $\rho_{ex}(G_1)$ and $\rho_{ex}(G_2)$ are presented as following.\\
\begin{lemma}\label{l4.5}
Let  $G_1$ and $G_2$ be the bicyclic graphs of order $n$.
Then $\rho_{ex}(G_1)> \rho_{ex}(G_2)>\frac{1}{2}(n-0.9)\sqrt{n-5}$ for $n\geq12$.
\end{lemma}
\begin{proof}
By direct computation, $\phi_{ex}(G_1;x)=\frac{x^{n-4}}{288(n-1)^2}h_{n, 1}(x)$, where
\begin{eqnarray*}
&&h_{n, 1}(x)\\
&&=288(n-1)^2x^4-52(4+(n-1)^2)(9+(n-1)^2)x+169(n-4)(1+(n-1)^2)^2\\
&&-\big[36(4+(n-1)^2)^2+8(9+(n-1)^2)^2
+72(n-4)(1+(n-1)^2)^2+676(n-1)^2\big]x^2,
\end{eqnarray*}
On one hand, it is easy to check that
\begin{eqnarray*}
&&h_{n, 1}\Big(\frac{1}{2}(n-0.9)\sqrt{n-3.8}\Big)\\
&&~~~~~~~~~~=\frac{\sqrt{n-3.8}}{5}(-130n^5+637n^4-2938n^3+6123n^2-10010n+5850)+\\
&&~~~~~~~~~~~~~~\frac{1}{125000}(-475000n^7-1177500n^6+32155750n^5-133858525n^4+\\
&&~~~~~~~~~~~~~~282465650n^3-381185036n^2+338531472n-198949811)<0
 \end{eqnarray*}
for $n\geq12$, it then follows that $\rho_{ex}(G_1)>\frac{1}{2}(n-0.9)\sqrt{n-3.8}$ .

And we have $\phi_{ex}(G_2;x)=\frac{x^{n-6}(x-1)(x+1)^2}{4(n-1)^2}h_{n, 2}(x)$, where
\begin{eqnarray*}
h_{n, 2}(x)&=&4(n-1)^2x^3-4(n-1)^2x^2-[(4+(n-1)^2)^2+(n-5)(1+(n-1)^2)^2]x\\
&&+(n-5)(1+(n-1)^2)^2.
\end{eqnarray*}
Similarly, it is easy to check that  $h_{n, 2}\big(\frac{1}{2}(n-0.9)\sqrt{n-3.8}\big)>0$, $h_{n, 2}\big(\frac{1}{2}(n-0.9)\sqrt{n-5}\big)<0$ and $h_{n, 2}(x)>0$ when $x>\frac{1}{2}(n-0.9)\sqrt{n-3.8}$ for $n\geq12$. And hence,  $\frac{1}{2}(n-0.9)\sqrt{n-5}<\rho_{ex}(G_2)<\frac{1}{2}(n-0.9)\sqrt{n-3.8}$. This completes the proof of the lemma.
\end{proof}

The characterization of all the bicyclic graphs of order $n$ with maximum degree $\Delta =n-2$ is straightforward.\\
{\bf Claim $4.1$} Let $G$ be a bicyclic graph of order $n\geq12$ with maximum degree $n-2$. Then $G\cong D_{n,i}$ for some $1\leq i\leq 9$, where $D_{n,i}$, $1\leq i\leq 9$, are depicted in Figure \ref{fig5}.
\begin{figure}[htb]
\centering
\includegraphics[width=0.6\textwidth]{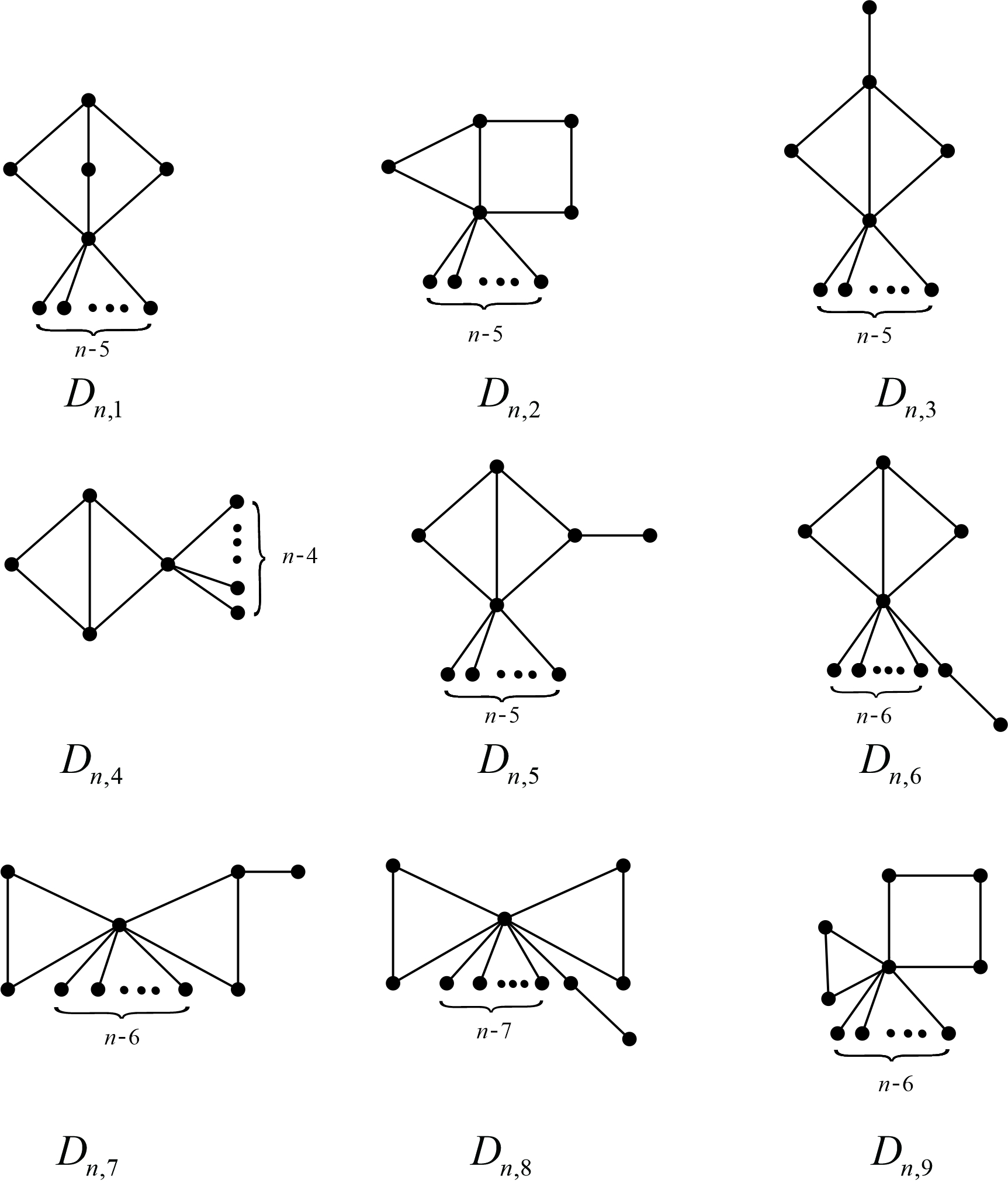}
\caption{The bicyclic graphs of order $n$ with $\Delta=n-2$.} \label{fig5}
\end{figure}

\begin{lemma} \label{l4.6}
Let $G$ be a bicyclic graph of order $n\geq12$ with maximum degree $n-2$. Then $\rho_{ex}(G)<\frac{1}{2}(n-0.9)\sqrt{n-5}$.
\end{lemma}
\begin{proof}
By Claim $4.1$, We can obtain that $G\cong D_{n,i}$ for $1\leq i\leq9$. To complete this proof, we just need  to show that $\rho_{ex}(D_{n,i})<\frac{1}{2}(n-0.9)\sqrt{n-5}$ for each $1\leq i\leq 9$ by Lemma \ref{l4.5}. We would take $D_{n,1}$ as an example to illustrate how to prove it, and the proofs of other $D_{n,i}$ with $2\leq i\leq9$ are similar. By direct computation, we have $\phi_{ex}(D_{n,1};x)=\frac{x^{n-4}}{2304(n-2)^2}h_{n,3}(x)$, where
\begin{eqnarray*}
h_{n,3}(x)&=&2304(n-2)^2x^4+2028(n-5)(1+(n-2)^2)^2\\
&&-\big(432(4+(n-2)^2)^2+576(n-5)(1+(n-2)^2)^2+8112(n-2)^2\big)x^2.
\end{eqnarray*}

On the other hand, for $n\geq12$, we get
\begin{eqnarray*}
&&h_{n,3}\Big(\frac{1}{2}(n-0.9)\sqrt{n-7}\Big)\\
&&~~~~~~~~~~=\frac{1}{625}(-49500n^7-229500n^6+9823185n^5-58733106n^4\\
&&~~~~~~~~~~~~~~+167513583n^3-286491054n^2+305387712n-157410096)<0,
\end{eqnarray*}
\begin{eqnarray*}
&&h_{n,3}\Big(\frac{1}{2}(n-0.9)\sqrt{n-5}\Big)\\
&&~~~~~~~~~~=\frac{1}{625}(130500n^7-2596500n^6+19697985n^5 - 77235816n^4 \\
&&~~~~~~~~~~~~~~+ 185131179n^3 - 298187244n^2 + 312906240n -160065600)>0.
\end{eqnarray*}
 It then follows that $\rho_{ex}(D_{n,1})<\frac{1}{2}(n-0.9)\sqrt{n-5}$.

For other $D_{n,i}$ with $2\leq i\leq9$, we can obtain $\rho_{ex}(D_{n,i})<\frac{1}{2}(n-0.9)\sqrt{n-5}$ for $n\geq12$ in an analogous way. The details are omitted here.
\end{proof}

Now we deal with all the graphs of order $n\geq 12$ with maximum degree $\Delta\leq n-3$ in a unified way as follows.

\begin{lemma}\label{l4.7}
Let $G$ be a bicyclic graph of order $n\geq12$ with maximum degree at most $n-3$. Then
$\rho_{ex}(G)<\rho_{ex}(G_2)$.
\end{lemma}
\begin{proof}
For $\Delta=n-3$, by Lemma \ref{l4.1}, $\rho_{ex}(G)
\leq\frac{1}{2}\Big(n-3+\frac{1}{n-3}\Big)\rho(G)$ since function $g(x)=x+\frac{1}{x}$ is increasing in $x$ when $x\geq 1$. And by Lemma \ref{l4.2},  $\rho(G)\leq \rho(H(n, n-3, 2))$. And hence, we have
\begin{eqnarray*}
\rho_{ex}(G)
\leq\frac{1}{2}\Big(n-3+\frac{1}{n-3}\Big)\rho(H(n, n-3, 2))
\leq\frac{1}{2}\Big(n-3+\frac{1}{n-3}\Big)\sqrt{n}
\end{eqnarray*}
 for $12\leq n \leq20$ , by Lemma \ref{l4.4}.
The values of $\frac{1}{2}(n-3+\frac{1}{n-3})\sqrt{n}$ and $\rho_{ex}(G_2)$ for  $12\leq n\leq20$ are shown in Table \ref{tabExtended}. We can obtain $\rho_{ex}
(G)\leq\rho_{ex}(G_2)$ for $12\leq n\leq20$.

Suppose in the following $n \geq20$, similarly, we have
\begin{eqnarray*}
\rho_{ex}(G)
\leq\frac{1}{2}\Big(n-3+\frac{1}{n-3}\Big)\rho(H(n, n-3, 2))
\leq\frac{1}{2}\Big(n-3+\frac{1}{n-3}\Big)\sqrt{n-1.2}.
\end{eqnarray*}
by Lemmas \ref{l4.1}, \ref{l4.2} and \ref{l4.4}.
Furthermore, it's easy to check that
$$\frac{1}{2}\Big(n-3+\frac{1}{n-3}\Big)\sqrt{n-1.2}<\frac{1}{2}(n-0.9)\sqrt{n-5}$$
when $n\geq20$.

Therefore we have $\rho_{ex}(G)<\rho_{ex}(G_2)$ for all $n\geq12$ by Lemma \ref{l4.5}.

Next, we consider all the graphs of order $n\geq12$ with maximum degree  $\Delta \leq n-4$. By Lemma \ref{l4.1}, $\rho_{ex}(G)\leq\frac{1}{2}(\Delta+\frac{1}{\Delta})\rho(G)
\leq\frac{1}{2}(n-4+\frac{1}{n-4})\rho(G)$ since function $g(x)=x+\frac{1}{x}$ is increasing in $x$ when $x\geq 1$, and by Lemmas \ref{l4.2} and \ref{l4.3},  $\rho(G)\leq \rho(H(n, n-3, 2))$.  And hence, we can deduce that
\begin{eqnarray*}
\rho_{ex}(G)
\leq\frac{1}{2}\Big(n-4+\frac{1}{n-4}\Big)\rho(H(n, n-3, 2))
\leq\frac{1}{2}\Big(n-4+\frac{1}{n-4}\Big)\sqrt{n}.
\end{eqnarray*}
for $n\geq 12$, by Lemma \ref{l4.4}.
Moreover
$$\frac{1}{2}\Big(n-4+\frac{1}{n-4}\Big)\sqrt{n}<\frac{1}{2}(n-0.9)\sqrt{n-5}$$
when $n\geq 8$.

Therefore, we get the desired result by Lemma \ref{l4.5}, completing the proof of this lemma.
\end{proof}

\begin{table}[h]
\centering
\caption{\newline The values of $\frac{1}{2}(n-3+\frac{1}{n-3})\sqrt{n}$ and $\rho_{ex}(G_2)$ for  $12\leq n\leq20$.}\label{tabExtended}
\begin{tabular}{|c|c|c|c|c|c|}
\hline      $n$ & 12&	13&	14&	15&	16\\
\hline  $\frac{1}{2}(n-3+\frac{1}{n-3})\sqrt{n}$& 15.7809&	18.208&	20.7492&	23.3993&	26.1538	\\
\hline   $\rho_{ex}(G_2)$&	15.8028&	18.2277&	20.7672&	23.4160&	26.1695\\
\hline
\end{tabular}
\begin{tabular}{|c|c|c|c|c|c|}
\hline     $n$  & 17&	18&	19&	20& \quad\quad\,\quad\,\\
\hline  $\frac{1}{2}(n-3+\frac{1}{n-3})\sqrt{n}$& 29.009&	31.9612&	35.0074&	38.1447& \quad\\
\hline   $\rho_{ex}(G_2)$& 29.0238&	31.9753&	35.0209&	38.1576& \\
\hline
\end{tabular}
\end{table}

\noindent{\bf The proof of Theorem \ref{th4.1}. }
Combining the  Lemmas \ref{l4.5}, \ref{l4.6} and \ref{l4.7}, the theorem holds naturally.
\end{proof}

{\bf Acknowledgements}
This work was supported by the National Natural Science Foundation of China [Grant number: 11971406].

\clearpage 
\section*{Appendix}

\begin{table}[htb]
\centering
\caption{The approximate values of $\rho(A_f(G_i))$} ($i=2,3,4$) for $n=6$\\  \label{tabn=6}
\begin{tabular}{|c|c|c|c|c|}
\hline      f(x,y)           &    $1$ & $x+y$    & $(x+y)^2$ & $(x+y)^3$\\
\hline   $\rho(A_f(G_2))$ & 2.7039 & 17.0855  & 111.8198  &749.14\\
\hline   $\rho(A_f(G_3))$ & 2.7321 & 16.3940  & 101.8670  &652.82\\
\hline   $\rho(A_f(G_4))$ &{\bf2.7913} &{\bf17.6015} &{\bf114.6620} &{\bf788.49} \\
\hline
\end{tabular}
\end{table}

\begin{table}[htb]
\centering
\caption{The approximate values of $\rho(A_f(G_i))$} ($i=2,3,4$) for $n=7$\\  \label{tabn=7}
\begin{tabular}{|c|c|c|c|c|}
\hline      f(x,y)           &    $1$    & $x+y$    & $(x+y)^2$ & $(x+y)^3$\\
\hline   $\rho(A_f(G_2))$ &2.8558& {\bf20.4063}&{\bf152.0299}  &{\bf1159.8}\\
\hline   $\rho(A_f(G_3))$ & 2.8332    & 18.6430    & 128.7889  &926.19\\
\hline   $\rho(A_f(G_4))$ & {\bf 2.9032}    &20.0004    &143.5387 &1131 \\
\hline
\end{tabular}
\end{table}

\end{document}